\LetLtxMacro\todonotestodo\todo
\renewcommand{\todo}[2][]{\todonotestodo[#1]{TODO: {#2}}}
\theoremstyle{definition}
\newtheorem{theorem}{Theorem}
\newtheorem*{rep@theorem}{\rep@title}
\newcommand{\newreptheorem}[2]{%
\newenvironment{rep#1}[1]{%
 \def\rep@title{#2 \ref{##1}}%
 \begin{rep@theorem}}%
 {\end{rep@theorem}}}
\newtheorem{lemma}{Lemma}[section]
\newtheorem{proposition}[lemma]{Proposition}
\newtheorem{corollary}[lemma]{Corollary}
\newtheorem{remark}[lemma]{Remark}
\newtheorem*{claim*}{Claim}
\newtheorem{notation}[lemma]{Notation}
\newtheorem*{theorem*}{Theorem}
\newtheorem*{corollary*}{Corollary}
\newtheorem*{lemma*}{Lemma}
\newtheorem*{problem*}{Problem}
\newcommand{\Q}{\mathbb{Q}}
\newcommand{\Z}{\mathbb{Z}}
\newcommand{\N}{\mathbb{N}}
\DeclareMathOperator{\Int}{Int}
\title[Lengths of factorizations of IVP\lowercase{s} on Krull domains with prime elements]{Lengths of factorizations of integer-valued polynomials on Krull domains with prime elements}
\thanks{\textit{Mathematics subject classification.} primary 13A05, 13F20; secondary 12E05, 13F05}
\thanks{\textit{Key words.} integer-valued polynomials, global fields, irreducible polynomials, factorizations, discrete valuations domains}
\author{Victor Fadinger}
\thanks{V. Fadinger is supported by the Austrian Science Fund (FWF): W1230}
\author{Daniel Windisch}
\keywords{}
\thanks{D.~Windisch is supported by the Austrian Science Fund (FWF): P~30934}
\begin{document}

\begin{abstract}
Let $D$ be a Krull domain admitting a prime element with finite residue field and let $K$ be its quotient field. We show that for all positive integers $k$ and $1 < n_1 \leq \ldots \leq n_k$ there exists an integer-valued polynomial on $D$, that is, an element of $\Int(D) = \{ f \in K[X] \mid f(D) \subseteq D \}$, which has precisely $k$ essentially different factorizations into irreducible elements of $\Int(D)$ whose lengths are exactly $n_1,\ldots,n_k$. Using this, we characterize lengths of factorizations when $D$ is a unique factorization domain and therefore also in case $D$ is a discrete valuation domain. This solves an open problem proposed by Cahen, Fontana, Frisch and Glaz.
\end{abstract}

\maketitle

\section{Introduction}

In a foregoing manuscript~\cite{FFW}, Frisch and the present authors investigated factorizations of integer-valued polynomials on valuation rings $V$ of global fields $K$. They showed that the factorization behaviour in this case is as wild as can be: For all positive integers $k$ and all integers $1 < n_1 \leq \ldots \leq n_k$ there exists an integer-valued polynomial $H$ of 
\[\Int(V) = \{f \in K[X] \mid f(V) \subseteq V \}\] 
which has precisely $k$ essentially different factorizations into irreducible elements of $\Int(V)$ whose lengths are exactly $n_1,\ldots,n_k$. This result gave a partial answer to the following open problem:

\begin{problem*}\cite[Problem 39]{open}
Analyze and describe non-unique factorization in $\Int(V)$, where $V$ is a DVR with finite residue field. 
\end{problem*}

The above problem stands in a long tradition of the study of factorizations in integer-valued polynomial rings. Building on results by Cahen and Chabert \cite{elasticitycahen} and Chapman and McClain~\cite{elasticitychapman}, Frisch \cite{integers} showed that every finite multiset of integers $>1$ occurs as the set of lengths (of factorizations into irreducibles) of some polynomial in $\Int(\Z)$. This was generalized to rings of integer-valued polynomials on Dedekind domains with infinitely many maximal ideals of finite index by Frisch, Nakato and Rissner \cite{globalcase} and is analogous to the result on DVRs above~\cite{FFW}.  

Since, so far, rings of integer-valued polynomials are not accessible by a general unified theory of factorizations, as it is the case for Krull domains, one has been forced to take a closer look to special multiplicative properties  of particular (irreducible) elements of these rings.

First steps into the direction of such a unified theory of factorizations for integer-valued polynomials were made by Reinhart~\cite{Reinhart1} and Frisch~\cite{Frisch_mon}, respectively, who proved that $\Int(D)$ is monadically Krull whenever $D$ is a unique factorization domain (UFD), respectively, more generally, a Krull domain. This means that the local multiplicative behaviour in such rings works as in Krull domains. In a follow-up paper, Reinhart~\cite{Reinhart2} then determined class groups of the monadic submonoids of $\Int(D)$. However, in practice these class groups are very hard to compute and therefore have not led to concrete applications.

%----------------New part-----

A special situation occurs, when considering $\Int(D)$, were $D$ is a Krull domain all of whose height one prime ideals have infinite index. In this case $\Int(D)=D[X]$, so $\Int(D)$ is a Krull domain with divisor class group $\mathcal C_v(\Int(D))\cong \mathcal C_v(D)$ and prime divisors in all classes. This implies that the factorization behavior of $\Int(D)$ is determined by the combinatorial object $\mathcal B(\mathcal C_v(D))$, the monoid of zero-sum sequences over the class group. This is a very well investigated object and two cases are to distinguish: 1. $\mathcal C_v(D)$ is infinite: Then $\Int(D)$ has full system of sets of lengths (arbitrary multisets of lengths need not be realizable).
2. $\mathcal C_v(D)$ is finite: For this case a vast amount of literature is available and a lot is known.

%------------------------------

In contrast to the effort investigating factorizations in rings of integer-valued polynomials, interestingly even the local case, that is, the case of integer-valued polynomials on DVRs, remained widely open. In the present manuscript, we close this gap and, in particular, solve the open problem above. In fact, we prove a stronger result:

\begin{theorem*}
Let $D$ be a Krull domain admitting a prime element $\pi$ such that $D/\pi D$ is finite. 

Then $\Int(D)$ has full system of multisets of lengths, i.e., for all positive integers $k$ and all integers $1 < n_1 \leq \ldots \leq n_k$ there exists an integer-valued polynomial $H \in \Int(D)$ which has precisely $k$ essentially different factorizations into irreducible elements of $\Int(D)$ whose lengths are exactly $n_1,\ldots,n_k$.
\end{theorem*}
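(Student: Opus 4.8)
The plan is to reduce the whole question to the $\pi$-adic combinatorics at the single prime $\pi$ and then to realize the prescribed multiset of lengths by an explicit product of building-block polynomials. Let $v = v_\pi$ be the discrete valuation on $K$ attached to $\pi$, so that $V = D_{\pi D}$ is a DVR with residue field $D/\pi D$ of some finite cardinality $q$. Because $\pi$ is prime in $D$, a polynomial of the form $g/\pi^m$ with $g \in D[X]$ having at least one coefficient not divisible by $\pi$ lies in $\Int(D)$ exactly when $v(g(a)) \geq m$ for all $a \in D$, and since $D/\pi D$ is finite this is a finite, residue-theoretic condition. I would build $H$ so that all roots of its numerator lie in $D$ and only $\pi$ occurs in its denominator. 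The decisive structural point is then that every factorization of such an $H$ in $\Int(D)$ is obtained by grouping the linear factors of the numerator and distributing the exponent of $\pi$ among the groups so that each group stays integer-valued; no other prime of $D$ can participate, since the $\pi$-primitive linear factors leave nothing for another prime to cancel. This reduces the entire factorization problem to the single valuation $v$, and it is exactly here that one sees the global-field hypothesis of \cite{FFW} to be inessential: all that is needed is one prime with finite residue field inside a Krull domain.

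Next I would establish the building blocks and their multiplicative behaviour. Taking products of linear factors $X - c$ over complete residue systems modulo increasing powers of $\pi$, one obtains polynomials whose values at every $a \in D$ are divisible by a predictable, computable power of $\pi$; dividing by that power yields integer-valued blocks $\pblock{r}{s}$, and one must compute the exact function $a \mapsto v(\prod_i (a - c_i))$ to determine these powers. A factorization of a product of such blocks then corresponds to a partition of the underlying multiset of linear factors together with an admissible distribution of the $\pi$-exponent among the parts, each part remaining integer-valued; a grouping is irreducible precisely when its exponent cannot be split nontrivially into two integer-valued pieces. Since the length of a factorization is its number of irreducible factors, controlling which partitions and exponent-distributions are admissible is the same as controlling the attainable lengths, and one organizes this count by means of the span operators $\inspn$ and $\outspn{\cdot}{\cdot}$.

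Finally I would assemble $H$ out of blocks so that its admissible factorizations are in bijection with $\{1, \dots, k\}$ and the factorization indexed by $i$ has length exactly $n_i$. Concretely, each prescribed length $n_i$ is encoded by a block configuration whose only nontrivial way of splitting produces precisely $n_i$ irreducible factors, and the configurations are chosen rigidly enough that no split can mix two of them. The main obstacle is precisely this rigidity: with only one prime available, all of the combinatorial freedom must be extracted from the finite residue field $D/\pi D$ and the valuation filtration, so one has to prove that the blocks behave independently — that the valuation of a product of groupings never drops below the sum of the individual minimal valuations except at the intended places — and that the intended factorizations are pairwise essentially different and exhaust all factorizations of $H$. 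Showing that the number of essentially different factorizations is exactly $k$, and that the lengths are exactly (not merely at least) $n_1, \dots, n_k$, is the heart of the argument.
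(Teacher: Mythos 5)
Your high-level frame --- work at the single prime $\pi$, write $H$ as a numerator in $D[X]$ divided by a power of $\pi$, and identify factorizations with groupings of the numerator's factors plus distributions of the exponent --- agrees with the paper's Lemmas~\ref{lemma:Krullfactorizations}--\ref{lemma:factorizations}. But your central design decision, that the numerator of $H$ should split into linear factors over $D$, discards the paper's key tool and leaves a gap that your own closing paragraph concedes (``the heart of the argument'') rather than closes. The paper's numerator blocks $F_i^{(s)}$ (Lemma~\ref{lemma:glueing}, built by an Eisenstein/Newton-polygon construction and Remark~\ref{remark:Newton}) are \emph{irreducible over $K$} of prescribed degree, with minimal valuation $(M_s)_i$ on the residue class $R_s$, attained at the common point $r_s$, and valuation $0$ on all other classes. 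Irreducibility over $K$ is exactly what makes Lemma~\ref{lemma:Krullfactorizations} work: every factorization in $\Int(D)$ must keep each block intact and can only redistribute the power of $\pi$, so counting factorizations reduces to counting admissible $q$-tuples of partitions, which the array combinatorics of Section~4 (Proposition~\ref{proposition:existenceofpairs}) pins down to exactly the $k$ hyperplane partitions of $[n_1]\times\ldots\times[n_k]$, of cardinalities $n_1,\ldots,n_k$. With a split numerator, by contrast, \emph{every} regrouping of the roots is a candidate factorization, and all but $k$ of them must be excluded by valuation conditions alone. This fails: if the roots lying in a given residue class are in general position (pairwise distinct modulo $\pi^2$), the minimal valuation of a group on each residue class depends only on \emph{how many} of its roots lie in that class, so exchanging two same-class roots between two parts of an admissible grouping preserves integer-valuedness and irreducibility while changing the factors into non-associated ones --- an unintended, essentially different factorization. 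The count is then governed by multinomial-type numbers, not by the prescribed $k$; avoiding this by clustering roots modulo higher powers of $\pi$ destroys the clean block structure and requires an analysis you do not supply. This is precisely why the single-prime case was open: the linear-factor constructions of Frisch (for $\Z$) and Frisch--Nakato--Rissner (for Dedekind domains with infinitely many finite-index primes) obtain rigidity from \emph{many primes} via CRT, one factorization per prime, and with only one prime available the paper must replace primes by the $q$ residue classes of $\pi$ and linear factors by irreducible blocks.

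Two further points. First, your claim that ``no other prime of $D$ can participate, since the $\pi$-primitive linear factors leave nothing for another prime to cancel'' is incorrect as stated: if the roots of the numerator cover all residue classes modulo some height-one prime $Q$ of finite index, then every value of the numerator lies in $Q$, the hypothesis $\min\{\mathsf v_Q(H(a))\mid a\in D\}=0$ of Lemma~\ref{lemma:Krullfactorizations} fails, and factorizations need not have the claimed grouping form (constant non-unit factors or $Q$-contributions can appear). The paper spends real effort ruling this out --- Remark~\ref{remark:infiniteindex}, Remark~\ref{remark:finitelymany}, the approximation-theorem choice of the representatives $r_s$, and hypotheses (iv) and (v) of Lemma~\ref{lemma:irreducibles} --- and your proposal would need an analogous argument. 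Second, the ``span operators'' you invoke to organize the count are never defined and carry no mathematical content; the actual combinatorial core that delivers \emph{precisely} $k$ factorizations with lengths \emph{exactly} $n_1,\ldots,n_k$ is the hyperplane/array machinery of Section~4, for which your proposal has no counterpart.
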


Note that the class of Krull domains admitting a principal prime ideal includes the following: 
\begin{enumerate}
\item unique factorization domains (UFDs) so, in particular,
\item discrete valuation rings,
\item rings of integers in global fields,
\item monoid algebras that are Krull, domains, which include polynomial rings over Krull domains.
\end{enumerate}
 
In the case of UFDs, one can observe a particularly nice dichotomy: Depending on the existence of a principal prime ideal (or, equivalently, height-one prime ideal) with finite residue field, factorizations of elements are either unique or as wild as possible.

\begin{corollary*}
Let $D$ be a unique factorization domain. Exactly one of the following holds:
\begin{itemize}
\item[(1)] $\Int(D) = D[X]$ is a unique factorization domain.
\item[(2)] $\Int(D)$ has full system of multisets of lengths, i.e., for all positive integers $k$ and all integers $1 < n_1 \leq \ldots \leq n_k$ there exists an integer-valued polynomial $H \in \Int(D)$ which has precisely $k$ essentially different factorizations into irreducible elements of $\Int(D)$ whose lengths are exactly $n_1,\ldots,n_k$.
\end{itemize}

Moreover, $(1)$ holds if and only if the residue field of each height-one prime ideal in $D$ is infinite.
\end{corollary*}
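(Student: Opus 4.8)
The plan is to show that the two alternatives correspond precisely to the two possible behaviours of the residue fields at height-one primes, and to obtain alternative (2) as a direct application of the Theorem. Since a UFD is a Krull domain whose height-one prime ideals are exactly the principal ideals $\pi D$ with $\pi$ a prime element, the key reduction I would isolate first is the equivalence
\[
\Int(D) = D[X] \iff D/\mathfrak p \text{ is infinite for every height-one prime ideal } \mathfrak p .
\]
Granting this, the corollary follows by splitting into the two exhaustive and mutually exclusive cases according to whether or not every height-one residue field is infinite.

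For the direction ``all residue fields infinite $\Rightarrow \Int(D)=D[X]$'', I would localize. Since $D$ is Krull, $D = \bigcap_{\mathfrak p} D_{\mathfrak p}$ over the height-one primes, each $D_{\mathfrak p}$ being a DVR, and the intersection formula $\Int(D) = \bigcap_{\mathfrak p} \Int(D_{\mathfrak p})$ holds (this is classical, cf.\ Cahen--Chabert). For a DVR $V$ with infinite residue field one has $\Int(V) = V[X]$ by the standard Vandermonde argument: were some coefficient of $f$ of strictly negative valuation, evaluating $f$ at sufficiently many elements lying in distinct residue classes (available because the residue field is infinite) forces a contradiction. Intersecting over $\mathfrak p$ then yields $\Int(D) = \bigcap_{\mathfrak p} D_{\mathfrak p}[X] = D[X]$. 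As $D$ is a UFD, $D[X]$ is a UFD by Gauss's theorem; this is alternative (1). In particular $\Int(D)$ has unique factorization, so no element admits two essentially different factorizations, and alternative (2) necessarily fails.

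For the converse direction, suppose some height-one prime $\mathfrak p = \pi D$ has $D/\pi D$ finite, say with residue representatives $a_1, \dots, a_q$. Then $f = \pi^{-1}\prod_{i=1}^{q}(X-a_i)$ lies in $\Int(D)\setminus D[X]$: for every $d \in D$ we have $d \equiv a_i \pmod{\pi}$ for some $i$, so $\pi$ divides $\prod_{j}(d-a_j)$ and hence $f(d) \in D$, while the leading coefficient $\pi^{-1} \notin D$. This proves $\Int(D) \neq D[X]$, completing the equivalence. Moreover, $\pi$ is now a prime element of the Krull domain $D$ with $D/\pi D$ finite, so $D$ satisfies the hypothesis of the Theorem and $\Int(D)$ has full system of multisets of lengths; this is alternative (2). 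Since (2) produces (taking $k \geq 2$) elements with essentially different factorizations, $\Int(D)$ is not a UFD and alternative (1) fails.

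Combining the two cases: every UFD falls into exactly one of them according to whether all height-one residue fields are infinite (yielding (1)) or at least one is finite (yielding (2)), and these cannot hold simultaneously; this is both the ``exactly one'' assertion and the ``moreover'' equivalence. The only genuinely non-elementary input is the equivalence isolated at the start, and within it the main obstacle is the backward direction---establishing $\Int(D_{\mathfrak p}) = D_{\mathfrak p}[X]$ for infinite residue field together with the localization formula $\Int(D) = \bigcap_{\mathfrak p}\Int(D_{\mathfrak p})$---but both are classical facts about integer-valued polynomials on Krull domains, so the substantive content of the corollary is entirely carried by the Theorem.
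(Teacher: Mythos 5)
Your proof is correct and takes essentially the same route as the paper: split according to whether every height-one prime of the UFD $D$ has infinite residue field, settle case (1) by the criterion $\Int(D)=D[X]$ together with Gauss's theorem, and settle case (2) by applying the main Theorem (legitimate, since a UFD is Krull and its height-one primes are generated by prime elements, so a finite residue field yields a prime element $\pi$ with $D/\pi D$ finite), with mutual exclusivity following because case (2) with $k\geq 2$ rules out unique factorization. The only difference is that the paper disposes of the equivalence ``$\Int(D)=D[X]$ iff all height-one residue fields are infinite'' by citing Cahen--Chabert, Corollary I.3.15, whereas you re-derive it from the localization formula $\Int(D)=\bigcap_{\mathfrak p}\Int(D_{\mathfrak p})$, the interpolation argument for DVRs with infinite residue field, and the explicit polynomial $\pi^{-1}\prod_{i}(X-a_i)$; this is sound, but note that the localization inclusion $\Int(D)\subseteq\Int(D_{\mathfrak p})$ you invoke as classical is precisely the nontrivial content hidden in that citation (it genuinely uses the Krull hypothesis and is not automatic for non-Noetherian domains), so it deserves the precise reference rather than a ``cf.''
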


\section{Preliminaries}

\textbf{Factorizations.} We give an informal presentation of factorizations. The interested reader is refered to the monograph by Geroldinger and Halter-Koch \cite{GHK} for a systematic introduction.

Let $R$ be an integral domain and $r\in R$. We say that $r$ is \textit{irreducible} (in $R$) if it cannot be written as the product of two nonunits of $R$. A \textit{factorization} of $r$ is a decomposition
\[r=a_1\cdots a_n\]
into irreducible elements $a_i$ of $R$. In this case $n$ is called the \textit{length} of this factorization of $r$. Let $s$ be a further element of $R$. We say that $r$ and $s$ are associated if there exists a unit $\varepsilon\in R$ such that $r=\varepsilon s$. We want to consider factorizations up to order and associates. In other words two factorizations
\[r=a_1\cdots a_n=u_1\cdots u_m\]
of $r$ are \textit{essentially the same} if $n=m$ and, after re-indexing if necessary, $u_i$ is associated to $a_i$ for all $i\in \{1,\ldots ,n\}$. Otherwise, the factorizations are called \textit{essentially different}.\\

\textbf{Valuations.} Let $K$ be a field. A valuation $\mathsf v$ on $K$ is a map
\[\mathsf v:K^\times\to G\]
where $(G,+,\leq)$ is a totally ordered Abelian group, subject to the following conditions for all $a,b\in K^\times$:
\begin{enumerate}
\item $\mathsf v(a\cdot b)=\mathsf v(a) +\mathsf v(b)$ and
\item $\mathsf v(a+b)\geq \inf\{\mathsf v(a),\mathsf v(b)\}$.
\end{enumerate}
The set $\{0\}\cup\{x\in K^\times\mid \mathsf v(x)\geq 0\}$ is called the \textit{valuation ring} of $\mathsf v$. It is a subring of $K$ with quotient field $K$. 

We will often use implicitly the following fact about valuations, which follows from the definition by an easy exercise: If $\mathsf v$ is a valuation on $K$ and $a,b\in K$ are such that $\mathsf v(a)\neq \mathsf v(b)$ then 
$$\mathsf v(a+b)= \inf\{\mathsf v(a),\mathsf v(b)\}.$$

If $\mathsf v(K^\times)\cong \Z$ we call $\mathsf v$ a \textit{discrete valuation} (by the more precise terminology of Bourbaki it would be a \textit{discrete rank one valuation}). If $\mathsf v$ is a discrete valuation on $K$, then there exists a valuation $\mathsf w:K^\times \to \Z$ with $\mathsf w(K^\times)=\Z$ and the same valuation ring as $\mathsf v$. We call $\mathsf w$ the \textit{normalized valuation} of this valuation ring.

For a general introduction to valuations, see \cite{Bourbaki}.\\

\textbf{Newton polygons.} Let $(K,\mathsf v)$ be a discretely valued field, where $\mathsf v$ is normalized, and let $f=a_0+a_1X+\ldots +a_nX^n$ be a polynomial over $K$. The \textit{Newton polygon} of $f$ is contructed as follows. To every term $a_iX^i$ we associate the point $(i,\mathsf v(a_i))\in \mathbb Z^2$, where we just ignore a point if its value in the second coordinate is infinity. We now form the lower boundary of the convex hull of the set of points $\{(0,\mathsf v(a_0)),\ldots,(n,\mathsf v(a_n))\}$ and call it the Newton polygon of $f$. For an introduction to this topic, see \cite[Ch. II, §6]{Neukirch}.\\

\textbf{Integer-valued polynomials.} Let $R$ be an integral domain with quotient field $K$. The set
\[\text{Int}(R)=\{f\in K[X]\mid f(R)\subseteq R\}\] 
is a subring of $K[X]$ and called the \textit{ring of integer-valued polynomials} on $R$.
Let $V$ be the valuation ring of valuation $\mathsf v$ on a field $K$. Every element $f\in K[X]$ can be written in the form $f=\frac{g}{d}$, where $g\in V[X]$ and $d\in V\setminus\{0\}$. It is immediate that $f\in \Int(V)$ if and only if $\min_{a\in V} \mathsf v(f(a))\geq \mathsf v(d)$.

For a detailed treatment of integer-valued polynomials we refer to the monograph by Cahen and Chabert \cite{Cahen-Chabert}.

We need the following fact that seems to be folklore. Lacking a proper reference, we give a proof of the statement.

\begin{remark}\label{remark:infiniteindex}
%Let $K$ be a field and $\mathsf{v}$ a valuation on $K$. By $\mathcal{O}$ and $M$ denote the valuation ring of $\mathsf{v}$ and its maximal ideal, respectively. Let $F \in \mathcal{O}[X] \setminus M[X]$ be a polynomial.

Let $D$ be an integral domain and $P \subseteq D$ be a prime ideal. Let $F \in D[X] \setminus P[X]$ be a polynomial.

If $\deg F < |D/P|$ then $F(a) \notin P$ for some $a \in D$.
\end{remark}

\begin{proof}
Assume to the contrary that $F(a) \in P$ for all $a \in D$. Then the reduction $\overline{F}$ of $F$ modulo $P$ is a non-zero polynomial that has a zero at every element of $D/P$. This is a contradiction to $\deg \overline{F} \leq \deg F < |D/P|$.

%Assume to the contrary that $\min \{\mathsf{v}(F(a)) \mid a \in \mathcal{O}\} > 0$. Then the reduction $\overline{F}$ of $F$ modulo $M$ is a non-zero polynomial that has a zero at every element of $\mathcal{O}/M$. This is a contradiction to $\deg \overline{F} \leq \deg F < |\mathcal{O}/M|$.
\end{proof}

\section{Construction of irreducible polynomials with prescribed valuations}

In this section, we construct irreducible polynomials whose minimal valuations on a given residue class can be prescribed. We will use those in the proof of the main result to build an integer-valued polynomial that has factorization lengths of our choice.

The following remark seems to be a well-known result, but we could not find a quotable reference, so we included a proof.

\begin{remark}\label{remark:Newton}
Let $(K,\mathsf v)$ be a discretely valued field and let $F$ be a polynomial of degree $n$ over $K$. If the Newton-polygon of $F$ has just one slope $\frac{\lambda}{n}$ such that $\gcd(\lambda,n)=1$, then $F$ is irreducible over $K$.
\end{remark}
\begin{proof}
Let $F$ be as in the statement and assume to the contrary that $F$ is reducible. Let $L$ be a splitting field of $F$ and let $\mathsf w$ be an extension of $\mathsf v$ to $L$. Now let $\alpha\in L$ be a root of $F$, then $[K(\alpha):K]=d<n$ and by \cite[Ch. II, Proposition 6.3]{Neukirch} $\mathsf w(\alpha)=-\frac{\lambda}{n}$. On the other hand, $\mathsf w(\alpha)\in \frac{1}{d}\Z$, so there is $m\in \Z$ with $m=\frac{\lambda d}{n}$, contradicting $\gcd(\lambda, n)=1$. 
\end{proof}

The following fact was proven by Samuel~\cite[Proposition 13]{Samuel} for Noetherian domains and Halter-Koch gave an alternative proof for Dedekind domains, see~\cite[Proposition 2.1]{globalcase}. The proof for Krull domains is identical to the latter but for convenience of the reader, we include it here.

\begin{remark}\label{remark:finitelymany}
Let $D$ be Krull domain and $\Sigma$ a positive integer. Then there are only finitely many height-one prime ideals $Q$ of $D$ with $|D/Q| \leq \Sigma$.
\end{remark}

\begin{proof}
Assume to the contrary that there exist infinitely many height-one prime ideals of $D$ with $|D/Q| = \Sigma$, and let $a \in D \setminus \{0\}$. Then, by definition of a Krull domain, there exist infinitely many height-one primes $Q$ of $D$ such that $|D/Q| = \Sigma$ and $a \notin Q$. For each such $Q$, we obtain $a^{q-1} - 1 \in Q$ because the multiplicative group of the field $D/Q$ has exactly $q-1$ elements. Again by definition of a Krull domain, $a^{q-1} - 1= 0$, whence $a^{q-1} = 1$. So, every non-zero element of $D$ is $(q-1)$-st root of unity, which is a contradiction.
\end{proof}

\begin{lemma}\label{lemma:glueing}
Let $D$ be a Krull domain admitting a prime element $\pi$ such that $D/\pi D$ is finite. Denote by $K$ the quotient field of $D$ and let $\Sigma$ be a positive integer. Let $\mathsf v_Q: K^\times \to \Z$ be the normalized valuation on $K$ with respect to $Q \in \mathfrak{X}(D)$ and denote $\mathsf{v} = \mathsf{v}_{\pi D}$. Let $\pi D = R_1,\ldots,R_q$ be the residue classes with respect to $\pi D$. For each $k \in \{1,\ldots,q\}$ choose $r_k \in R_k$ such that $r_1=\pi$ and $r_s - r_1 \in Q$ for all $s\in \{1,\ldots,q\}$ and all $Q \in \mathfrak{X}(D) \setminus \{\pi D\}$ with $|D/Q| \leq \Sigma$ (this is possible by Remark \ref{remark:finitelymany} and the Approximation Theorem for Krull domains).

Then, for every $s\in \{1,\ldots,q\}$ and every positive integer $n \geq 2$, there exist infinitely many polynomials $F \in D[X]$ of degree $n$ that are pairwise non-associated and irreducible over $K$ with the following properties:
\begin{itemize}
\item[(1)] $\min\{ \mathsf v(F(a)) \mid a\in R_s\}=\mathsf v(F(r_s))=n$.
\item[(2)] For all $a\in D\setminus R_s$ we have $\mathsf v(F(a))=0$.
\item[(3)] $\mathsf{v}_Q(F(0)) = 0$ for all $Q \in \mathfrak{X}(D) \setminus \{ \pi D\}$ with $|D/Q| \leq \Sigma$.
\end{itemize}
\end{lemma}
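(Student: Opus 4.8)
The plan is to reduce everything to a single monic polynomial over $D$ whose reduction modulo $\pi$ is irreducible, by means of the affine substitution that recenters the relevant residue class $R_s$ at the origin and rescales by $\pi$. Concretely, I would fix a monic polynomial $h \in (D/\pi D)[Z]$ of degree $n$ that is irreducible over the finite field $D/\pi D$ (such an $h$ exists for every $n \geq 2$), choose a monic lift $\tilde G = \sum_{j=0}^{n} \beta_j Z^j \in D[Z]$ with $\beta_n = 1$ and $\tilde G \equiv h \pmod{\pi}$, and then set
\[
F(X) = \pi^n\, \tilde G\!\left(\frac{X - r_s}{\pi}\right) = \sum_{j=0}^{n} \beta_j\, \pi^{\,n-j} (X - r_s)^j .
\]
Since $\beta_j \in D$ and $n - j \geq 0$, this is a monic polynomial of degree $n$ in $D[X]$, and as $X \mapsto (X-r_s)/\pi$ is a $K$-automorphism of $K[X]$, the polynomial $F$ is irreducible over $K$ exactly when $\tilde G$ is.

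Next I would verify (1) and (2) by the non-archimedean computation, using only $\mathsf v(\beta_j) \geq 0$ and the rule that $\mathsf v$ of a sum equals the minimum once the summand valuations are distinct. For $a \in R_s$ one has $\mathsf v(a - r_s) \geq 1$, so the $j$-th summand has value $\mathsf v(\beta_j) + (n-j) + j\,\mathsf v(a - r_s) \geq n$; at $a = r_s$ only the $j = 0$ term survives and gives $\mathsf v(F(r_s)) = \mathsf v(\beta_0) + n = n$, because $\beta_0 \equiv h(0) \not\equiv 0 \pmod{\pi}$. This yields (1). For $a \in D \setminus R_s$ one has $\mathsf v(a - r_s) = 0$, so the top term ($j = n$) has value $0$ while every other term has value $\geq n - j \geq 1$; the unique minimum forces $\mathsf v(F(a)) = 0$, which is (2).

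The main obstacle, and the very reason for the substitution, is irreducibility. Condition (1) pins $\mathsf v(F(r_s)) = n = \deg F$, so the Newton polygon of $F$ recentered at $r_s$ is forced to be the single segment from $(0,n)$ to $(n,0)$; its slope is $-1 = -n/n$ with $\gcd(n,n) = n \neq 1$, so Remark~\ref{remark:Newton} cannot certify irreducibility from the $\pi$-adic data alone. The substitution $X \mapsto r_s + \pi Z$ removes this obstruction: it turns $F$, up to the unit $\pi^n$, into the monic polynomial $\tilde G$ with unit constant term, and I arrange $\tilde G \bmod \pi$ to be irreducible. Since a Krull domain is integrally closed, Gauss's lemma applies to the monic $\tilde G$: any nontrivial monic factorization over $K$ would already lie in $D[Z]$ and reduce to a nontrivial factorization of the irreducible $\tilde G \bmod \pi$, a contradiction. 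Hence $\tilde G$, and therefore $F$, is irreducible over $K$.

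Finally I would secure (3) together with the infinitude. Using $r_s \equiv r_1 = \pi \pmod{Q}$ for every bad prime $Q$ (those with $|D/Q| \leq \Sigma$, finitely many by Remark~\ref{remark:finitelymany}), a direct reduction gives $F(0) \equiv \pi^n\, \tilde G(-1) \pmod{Q}$; since $\pi \notin Q$, condition (3) is equivalent to $\tilde G(-1) \notin Q$ for these finitely many $Q$. I would keep the coefficients $\beta_j$ with $j \geq 1$ as any fixed lift of $h$ and choose $\beta_0$ by the Approximation Theorem so that $\beta_0 \equiv h(0) \pmod{\pi}$ (preserving the reduction, hence irreducibility) while $\tilde G(-1) = \beta_0 + \sum_{j\geq 1}(-1)^j\beta_j$ avoids each $Q$; these are congruence conditions at pairwise distinct height-one primes and are thus simultaneously solvable. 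Infinitely many pairwise non-associated examples then arise from varying the lift $\tilde G$ of the fixed $h$ over its infinitely many admissible choices: all resulting $F$ are monic of degree $n$, so distinct polynomials are automatically non-associated.
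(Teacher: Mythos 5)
Your proof is correct, but at the decisive step --- irreducibility --- it takes a genuinely different route from the paper's. The paper works first in the class $R_1=\pi D$ with the explicit polynomial $F=X^n+\pi^mX^{n-1}+\pi^{n+1}\sum_{i=0}^{n-2}X^i$ (for $m\geq n+1$), whose Newton polygon is a single segment of slope $-\frac{n+1}{n}$; since $\gcd(n+1,n)=1$, Remark~\ref{remark:Newton} gives irreducibility, so the paper's $F$ cuts out a \emph{totally ramified} extension. Your $F=\pi^n\,\tilde G\bigl((X-r_s)/\pi\bigr)$ instead cuts out an \emph{unramified} extension: irreducibility comes from irreducibility of the reduction $\tilde G\bmod \pi$ over the finite field $D/\pi D$ (a finite domain is a field, so this makes sense) combined with Gauss's lemma for monic polynomials over the integrally closed domain $D$ --- necessarily so, since, as you correctly observe, your slope $-1$ Newton polygon certifies nothing. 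Both arguments then verify (1) and (2) by the same ultrametric estimates, and both move between residue classes by an affine substitution. The trade-off appears in (3) and in producing infinitely many examples: in the paper $F(0)=\pi^{n+1}$, so (3) is free at every $Q\neq\pi D$ and infinitude comes from simply varying $m$; in your construction $F(0)\equiv\pi^n\tilde G(-1)\pmod Q$ need not be a unit at the bad primes, so you must re-invoke the Approximation Theorem for each lift to steer $\beta_0$ (legitimate, since the finitely many bad $Q$ have finite residue fields with at least two elements, so the avoidance condition becomes a congruence), and your infinitude argument additionally uses that $D$ is infinite --- true, but worth stating, since it follows only because a domain with a prime element cannot be a field. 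In sum, your approach buys independence from the Newton-polygon machinery, needing only finite-field theory and integral closedness of Krull domains, while the paper's buys an essentially computation-free treatment of (3) and of pairwise non-associatedness.
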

\begin{proof}
First, let $s=1$, so $R_s=\pi D$. Setting $c = \pi^{n+1}$ and $m \geq n+1$, we claim that $F=X^n + \pi^m X^{n-1} +\sum_{i=0}^{n-2} c X^i$ has the desired properties. Let $a\in \pi D$. Then
\begin{align*}
\mathsf v(F(a))&\geq \min\{\mathsf v(c),\mathsf v(c)+\mathsf v(a),\ldots, m +(n-1)\mathsf v(a), n\mathsf v(a)\}\\
&\geq n=\min\{\mathsf v(c),\mathsf v(c)+\mathsf v(r_1),\ldots, m+(n-1)\mathsf v(r_1), n\mathsf v(r_1)\}=\mathsf v(F(r_1)),
\end{align*}
where the last equality follows from the fact that the minimum is only accepted at the value $n\mathsf v(r_1)$, which proves (1).

For (2), let $a\in D\setminus \pi D$. Then
\[
\mathsf v(F(a))\geq \min\{\mathsf v(c), \mathsf v(c)+\mathsf v(a),\ldots, m+(n-1)\mathsf v(a), n\mathsf v(a)\}=0.
\]
Since this minimum is only accepted at the value $n\mathsf v(a)$, we conclude $\mathsf v(F(a))=0$. 

(3) is trivial as $F(0) = \pi^{n+1}$. To see that $F$ is irreducible, note that its Newton-polygon has exactly one slope, which is $-\frac{n+1}{n}$, and use Remark~\ref{remark:Newton} for the discrete valuation ring $D_{\pi D}$.

The fact that we can vary $m$ over all integers $\geq n+1$ gives infinitely many such $F$ pairwise non-associated over $K$.

Now let $s\neq 1$. We set $F'(X)=F(X-r_s+r_1)$. Since $X\mapsto X-r_s+r_1$ induces an isomorphism $K[X]\to K[X]$, we conclude that all these $F'$ are irreducible and pairwise non-associated over $K$. Moreover,
\[
\mathsf v(F'(r_s))=\mathsf v(F(r_1))=n
\]
and if $a\in R_s$, then $a-r_s\in \pi D$, so $(a-r_s)+r_1\in \pi D$, whence
\[
\mathsf v(F'(a))=\mathsf v(F(a-r_s+r_1))\geq \mathsf v(F(r_1))=n=\mathsf v(F'(r_s)).
\]

On the other hand, if $a\in D\setminus R_s$ then $a-r_s\notin \pi D$, thus $(a-r_s)+r_1\notin \pi D$ and it follows
\[
\mathsf v(F'(a))=\mathsf v(F(a-r_s+r_1))=0.
\]
Finally, $\mathsf v_Q(F'(0)) =\mathsf v_Q(F(r_1-r_s)) =\mathsf v_Q(c) = 0$ for $Q \neq \pi D$ with $|D/Q| \leq \Sigma$.
\end{proof}

\section{Combinatorial toolbox}

The following combinatorial section is identical with the one in \cite{FFW}, but to make the manuscript readable without consulting \cite{FFW} all the time, we include it here, but without proofs.

\begin{notation}\label{notation:interval}
Let $n$ be a positive integer. We write $[n] = \{1,\ldots,n\}$.
\end{notation}

\begin{notation}\label{notation:hyperplanes}
Let $2 \leq k$, $2\leq n_1 \leq \ldots \leq n_k$ be integers, $i,j \in \{1, \ldots,k\}$ with $i < j$, $S \subseteq [n_i]$, and $T \subseteq [n_j]$. We set 
\[ H_{i,j}(S,T) = H_{j,i}(T,S) = [n_1] \times \ldots \times [n_{i-1}] \times S \times [n_{i+1}] \times \ldots \times [n_{j-1}] \times T \times [n_{j+1}] \times \ldots \times [n_k].\]

For $s \in [n_i]$, we define $H_{i,j}(s,T)=H_{i,j}(\{s\},T)$. Moreover, we write $H_{i,j}(S,[n_j]) = H_i(S)$. \\

Note that the $H_i(s)$ are $(k-1)$-dimensional hyperplanes in the grid $[n_1] \times \ldots \times [n_k]$. Analogously, the $H_{i,j}(s,t)$ are $(k-2)$-dimensional hyperplanes.
\end{notation}

\begin{lemma}\label{lemma:hyperplanes}
Let $k >2$ and $1< n_1 \leq \ldots \leq n_k$ be integers. Let $I \subseteq [n_1] \times \ldots \times [n_k]$. Assume that for every $i \in \{1,\ldots,k\}$ and $r \in [n_i]$ there exists $j \in \{1,\ldots,k\}\setminus \{i\}$ and $T \subseteq [n_j]$ such that $I \cap H_i(r) = H_{i,j}(r,T)$. In other words, every intersection of $I$ with a $(k-1)$-dimensional hyperplane is the union of $(k-2)$-dimensional parallel hyperplanes.

Then there exists $\ell \in \{1, \ldots, k\}$ and $S \subseteq [n_\ell]$ such that $I = H_\ell(S)$. That is, $I$ is the union of $(k-1)$-dimensional parallel hyperplanes.
\end{lemma}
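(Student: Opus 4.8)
The plan is to classify the set $I$ by the shape of its codimension-one slices $I \cap H_i(r)$. By hypothesis each slice equals $H_{i,j}(r,T)$ for some $j \neq i$ and $T \subseteq [n_j]$, so it falls into exactly one of three types: empty (when $T = \emptyset$), the whole slice $H_i(r)$ (when $T = [n_j]$), or what I will call a \emph{proper cylinder}, where $\emptyset \subsetneq T \subsetneq [n_j]$ and the slice genuinely constrains exactly one further coordinate $j$.

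First I would dispose of the case in which no slice, in any direction, is a proper cylinder. Then I fix any direction, say $\ell = 1$, and set $S = \{ r \in [n_1] \mid I \cap H_1(r) = H_1(r)\}$. Since a point $x$ lies in $I$ exactly when the slice $I \cap H_1(x_1)$ is nonempty, and every slice is now empty or full, one checks directly that $x \in I \iff x_1 \in S$, that is, $I = H_1(S)$. This case also absorbs $I = \emptyset$ and $I$ equal to the whole grid.

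The substantial case is when some slice is a proper cylinder, say $I \cap H_a(u) = H_{a,b}(u,T)$ with $b \neq a$ and $\emptyset \subsetneq T \subsetneq [n_b]$. I expect that then $I = H_b(T)$, and the point is to promote the constraint ``$x_b \in T$'', valid a priori only on the slice $x_a = u$, to a global one. This is exactly where the hypothesis $k > 2$ enters: I choose a third coordinate $i \notin \{a,b\}$, which is possible precisely because $k \geq 3$, and examine, for each $r \in [n_i]$, the double slice $I \cap H_a(u) \cap H_i(r)$. Read off the $a$-slice it equals $\{ x \mid x_a = u,\ x_i = r,\ x_b \in T \}$, a proper nonempty subset of the plane $\{x_a = u,\ x_i = r\}$ that constrains only coordinate $b$. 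Read off the $i$-slice it equals $H_{i,j}(r,T') \cap H_a(u)$ for the $j$ and $T'$ supplied by the hypothesis. Comparing the two descriptions should force $j \neq a$ (else the intersection is empty or the full plane), then $\emptyset \subsetneq T' \subsetneq [n_j]$, then $j = b$, and finally $T' = T$. This gives $I \cap H_i(r) = H_{i,b}(r,T)$ for every $r \in [n_i]$; taking the union over $r \in [n_i]$, which exhausts the grid, yields $x \in I \iff x_b \in T$ for all $x$, that is, $I = H_b(T)$.

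The hard part, and really the only delicate point, is the matching step: ruling out the degenerate possibilities for the pair $(j,T')$ and concluding $j = b$ and $T' = T$. The crucial comparison is that a subset of the plane $\{x_a = u,\ x_i = r\}$ constrained only in coordinate $j$ cannot coincide with one constrained only in coordinate $b$ unless $j = b$; when $j \neq b$ this is refuted by exhibiting a point with $x_j \in T'$ but $x_b \notin T$, which exists because $T'$ is nonempty and $T$ is proper. Everything else is bookkeeping.
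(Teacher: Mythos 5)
Your proof is correct: the case split on whether some slice is a proper cylinder (empty/full slices giving $I=H_1(S)$ directly), and otherwise the use of a third coordinate $i\notin\{a,b\}$ — which is exactly where $k>2$ is needed — to compare the two descriptions of the double slice $I\cap H_a(u)\cap H_i(r)$ and force $j=b$, $T'=T$, hence $I=H_b(T)$, is a complete and sound argument; the matching step is carried out correctly (ruling out $j=a$ via empty-or-full, ruling out $j\notin\{a,b\}$ via a point with $x_j\in T'$ and $x_b\notin T$). This is essentially the same slicing argument as the proof the paper relies on (which it omits here, deferring to \cite{FFW}), so there is nothing to add.
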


\begin{notation}\label{notation:tensors}
Let $k >1$ and $1< n_1 \leq \ldots \leq n_k$ be integers. By $\mathbb{Q}^{n_1\times \ldots \times n_k}$ we denote the set of all $(n_1 \times \ldots \times n_k)$--arrays, that is, the $k$-dimensional analogues of matrices over $\Q$. Let $M \in \mathbb{Q}^{n_1\times \ldots \times n_k}$. For $i \in [n_1] \times \ldots \times [n_k]$, we write $M_i$ for the entry of $M$ indexed by $i$.

For $I \subseteq [n_1] \times \ldots \times [n_k]$, let $Z_I = \{M \in \mathbb{Q}^{n_1\times \ldots \times n_k} \mid \sum_{i \in I} M_i = 0\}$. Moreover,
\[ Z := \bigcap_{\substack{\ell \in \{1,\ldots k\} \\ r \in [n_\ell]}} Z_{H_\ell(r)}.\]

For instance, if $k = 2$ then $Z$ is the set of all $(n_1 \times n_2)$--matrices with all row and column sums equal to $0$.
\end{notation}

Since the elements of $Z$ are defined by the property that sums over hyperplanes are $0$, clearly, sums over disjoint unions of hyperplanes are also $0$. The next lemma shows that no other sum of a subset of the entries of $M \in Z$ is necessarily $0$. We will use it to show the existence of an array $M \in \mathbb{Q}^{n_1\times \ldots \times n_k}$ such that the sum over a subset of the entries of $M$ is $0$ if and only if the corresponding index set is a disjoint union of hyperplanes.

\begin{lemma}\label{lemma:onesum}
Let $k > 1$ and $1 < n_1 \leq \ldots \leq n_k$ be integers. Let $I \subseteq [n_1] \times \ldots \times [n_k]$ be non-empty such that $I \neq H_\ell(S)$ for all $\ell \in \{1,\ldots,k\}$ and $S \subseteq [n_\ell]$.

Then $Z \setminus Z_I \neq \emptyset$.
\end{lemma}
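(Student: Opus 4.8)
The plan is to prove the contrapositive: assuming $Z \subseteq Z_I$, I will show that $I = H_\ell(S)$ for some $\ell$ and $S$ (the empty set being excluded by hypothesis). Identify each subset $J \subseteq [n_1]\times\ldots\times[n_k]$ with its indicator array $\mathbf 1_J \in \Q^{n_1\times\ldots\times n_k}$, and each array $A$ with the linear functional $M \mapsto \sum_i A_i M_i$; under this identification $Z_J = \ker \mathbf 1_J$. Since $Z = \bigcap_{\ell,r} Z_{H_\ell(r)}$ is the intersection of the kernels of the functionals $\mathbf 1_{H_\ell(r)}$, the inclusion $Z \subseteq Z_I = \ker \mathbf 1_I$ says that $\mathbf 1_I$ vanishes on this intersection. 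By the standard finite-dimensional duality fact — a functional vanishing on a finite intersection of kernels lies in the span of the corresponding functionals — this yields $\mathbf 1_I \in \spn\{\mathbf 1_{H_\ell(r)} \mid \ell \in \{1,\ldots,k\},\ r \in [n_\ell]\}$.

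Next I unwind this span. Evaluating $\mathbf 1_{H_\ell(r)}$ at an index $i = (i_1,\ldots,i_k)$ gives $1$ exactly when $i_\ell = r$, so a relation $\mathbf 1_I = \sum_{\ell,r} c_{\ell,r}\, \mathbf 1_{H_\ell(r)}$ is precisely the statement that
\[ \mathbf 1_I(i) = \sum_{\ell=1}^{k} c_{\ell, i_\ell} \qquad \text{for all } i \in [n_1]\times\ldots\times[n_k]. \]
In other words, the indicator of $I$ is additively \emph{separable}: a sum of functions each depending on a single coordinate. The task is now purely combinatorial: I must show that a $\{0,1\}$-valued separable function on the grid is the indicator of a union of parallel hyperplanes in a single direction.

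The crux is to control the directions in which the coefficient family varies. Call a direction $\ell$ \emph{active} if $r \mapsto c_{\ell,r}$ is non-constant, and suppose two distinct directions $\ell_1, \ell_2$ were both active. Fixing every coordinate except the $\ell_1$-th, the value $\mathbf 1_I(i) = c_{\ell_1, i_{\ell_1}} + b$, where $b = \sum_{\ell \neq \ell_1} c_{\ell, i_\ell}$, must remain in $\{0,1\}$ as $i_{\ell_1}$ varies. Since $c_{\ell_1,\cdot}$ assumes at least two values, these must all lie in $\{-b,\,1-b\}$, which forces $b$ to equal one fixed number independent of the remaining coordinates. But $b$ contains the summand $c_{\ell_2, i_{\ell_2}}$, which is non-constant — a contradiction. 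Hence at most one direction $\ell_0$ is active. If none is active then $\mathbf 1_I$ is constant, so, as $I \neq \emptyset$, $I$ is the entire grid $= H_1([n_1])$; if $\ell_0$ is the unique active direction then $\mathbf 1_I(i)$ depends only on $i_{\ell_0}$, whence $I = H_{\ell_0}(S)$ for the set $S$ of those $r$ at which the value equals $1$. In either case $I$ has the excluded form, completing the contrapositive. I expect this combinatorial step — that a $\{0,1\}$-valued separable function admits at most one active coordinate — to be the heart of the argument, the preceding linear-algebra reduction being routine.
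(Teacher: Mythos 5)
Your proof is correct, and I can find no gap in it: the duality step (a functional vanishing on $\bigcap_{\ell,r}\ker\varphi_{H_\ell(r)}$ lies in $\spn\{\varphi_{H_\ell(r)}\}$) is the standard finite\-/dimensional fact, the pointwise unwinding $\mathbf{1}_I(i)=\sum_{\ell}c_{\ell,i_\ell}$ is right, and the ``at most one active direction'' argument is airtight — for any fixed choice of the remaining coordinates with offset $b$, the non-constant family $c_{\ell_1,\cdot}$ must take exactly the two values $-b$ and $1-b$, which pins $b=-\min_r c_{\ell_1,r}$ independently of those coordinates and so rules out a second active direction. One caveat on the comparison you were asked for: this paper deliberately states its combinatorial section \emph{without proofs}, importing it verbatim from \cite{FFW}, so there is no in-paper proof of this lemma to compare against. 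Judging from the structure of the toolbox, the intended argument there runs along different lines: Lemma~\ref{lemma:hyperplanes} (stated for $k>2$ and otherwise unused at this point) is exactly the glue needed for an induction on the dimension $k$ — one shows that each slice $I\cap H_\ell(r)$ inherits the hypothesis inside a $(k-1)$-dimensional grid, applies the inductive statement there, and then invokes Lemma~\ref{lemma:hyperplanes} to reassemble $I$ as a union of parallel hyperplanes, with a hands-on base case for matrices. Your route is genuinely different and arguably cleaner: it is uniform in $k$, needs no induction and no appeal to Lemma~\ref{lemma:hyperplanes}, and it isolates the real content of the statement in a single combinatorial fact, namely that a $\{0,1\}$-valued additively separable function on a full grid can depend on at most one coordinate. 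What the inductive route buys instead is that Lemma~\ref{lemma:hyperplanes} has to be proved anyway (it is used independently in the proof of Theorem~\ref{theorem:setsoflenghts} via Proposition~\ref{proposition:existenceofpairs}), so the induction recycles work, whereas your argument adds the (short, standard) linear-algebra duality step as a separate ingredient.
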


\begin{proposition}\label{proposition:existence}
Let $k$ be a positive integer and $1 < n_1 \leq \ldots \leq n_k$ integers. Then there exists $M \in Z$ such that $M \in Z_I$ only if $I$ is a disjoint union of hyperplanes, that is, $I = H_\ell(S)$ for some $\ell \in \{1,\ldots,k\}$ and $S \subseteq [n_\ell]$.
\end{proposition}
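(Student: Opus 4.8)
The plan is to produce $M$ by a counting argument over the infinite field $\Q$. First observe that $Z$ is a $\Q$-linear subspace of the finite-dimensional space $\Q^{n_1\times\ldots\times n_k}$, since it is cut out by the linear hyperplane-sum equations defining it in Notation~\ref{notation:tensors}. For each index set $I \subseteq [n_1]\times\ldots\times[n_k]$, the intersection $Z \cap Z_I$ is likewise a $\Q$-subspace of $Z$, being the kernel within $Z$ of the single linear functional $M \mapsto \sum_{i \in I} M_i$. Call an index set $I$ \emph{bad} if it is non-empty and $I \neq H_\ell(S)$ for every $\ell \in \{1,\ldots,k\}$ and every $S \subseteq [n_\ell]$. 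Since the grid $[n_1]\times\ldots\times[n_k]$ is finite, there are only finitely many bad index sets.

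Next I would invoke Lemma~\ref{lemma:onesum}: for every bad $I$ it yields an element of $Z \setminus Z_I$, so $Z \cap Z_I \subsetneq Z$ is a \emph{proper} subspace. It therefore remains to find a single $M \in Z$ lying outside the finite collection of proper subspaces $\{\, Z \cap Z_I : I \text{ bad}\,\}$. This is exactly the standard fact that a vector space over an infinite field is never a union of finitely many proper subspaces; since $\Q$ is infinite, such an $M$ exists.

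Finally I would verify that this $M$ has the asserted property. Suppose $M \in Z_I$. If $I = \emptyset$, then $I = H_\ell(\emptyset)$ is (vacuously) a disjoint union of hyperplanes; if $I = H_\ell(S)$ for some $\ell$ and $S$, we are done directly. Otherwise $I$ is bad, and then $M \notin Z_I$ by the choice of $M$, contradicting $M \in Z_I$. Hence $M \in Z_I$ forces $I$ to be a disjoint union of hyperplanes, as required. The degenerate case $k = 1$ is handled separately: there every subset $S \subseteq [n_1]$ coincides with $H_1(S)$, so no index set is bad and any $M$ (e.g.\ $M = 0$) works.

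The entire genuine content is deferred to Lemma~\ref{lemma:onesum}, which supplies the properness of each $Z \cap Z_I$; I do not anticipate a real obstacle beyond that, since the passage from ``each bad $I$ avoidable individually'' to ``all bad $I$ avoidable simultaneously'' is the routine finite-union-of-subspaces argument over $\Q$. The only point requiring a little care is that the quantifier in the statement ranges over all $I$, including the empty set and the honest hyperplane unions $H_\ell(S)$, for which membership in $Z_I$ is automatic and fully consistent with the desired conclusion.
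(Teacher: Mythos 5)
Your proof is correct and takes essentially the same approach as the paper's: this proposition is stated here without proof (the combinatorial section defers to~\cite{FFW}), and the argument there is precisely yours, namely that Lemma~\ref{lemma:onesum} makes $Z \cap Z_I$ a proper subspace of $Z$ for each of the finitely many bad $I$, while a vector space over the infinite field $\Q$ cannot be a finite union of proper subspaces. Your explicit treatment of $I = \emptyset$ and of the degenerate case $k=1$ (where no bad sets exist) correctly covers the edge cases the quantifier in the statement includes.
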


%\begin{proof}
%Note that $Z_I$ is a subspace of $\Q^{n_1 \times \ldots \times n_k}$ for every $I \subseteq [n_1] \times \ldots \times [n_k]$. Assume that the statement of the proposition does not hold. Then there are finitely many non-empty $I_1,\ldots,I_n \subseteq [n_1] \times \ldots \times [n_k]$ such that for each $j \in \{1,\ldots,n\}$ it holds that $I_j \neq H_\ell(S)$ whenever $\ell \in \{1,\ldots,k\}$ and $S \subseteq [n_\ell]$ and $Z \subseteq \bigcup_{j = 1}^n Z_{I_j}$. It follows by Fact~\ref{fact:subspaces} that $Z \subseteq Z_{I_j}$ for some $j$. This is a contradiction to Lemma~\ref{lemma:onesum}.
%\end{proof}

\begin{notation}\label{notation:pairsoftensors}
Let $k,q >1$ and $1< n_1 \leq \ldots \leq n_k$ be integers. For $I_1,\ldots,I_q \subseteq [n_1] \times \ldots \times [n_k]$, let
\[
Z_{I_1,\ldots,I_q} = \{(M_1,\ldots,M_q) \in (\mathbb{Q}^{n_1\times \ldots \times n_k})^q \mid \forall i,j \ \sum_{m \in I_i} (M_i)_m = \sum_{m\in I_j} (M_j)_m\}.
\]

Moreover, we define
\[ \overline Z := \bigcap_{\substack{\ell \in \{1,\ldots k\} \\ r \in [n_\ell]}} Z_{H_\ell(r),\ldots, H_\ell(r)}.\]

For $s\in \Q$ we denote by $(s)$ the array all of whose entries are $s$.
\end{notation}

\begin{lemma}\label{lemma:avoidance}
Let $k,q > 1$ and $1 < n_1 \leq \ldots \leq n_k$ be integers. Let $I_1,\ldots, I_q \subseteq [n_1] \times \ldots \times [n_k]$ be non-empty such that either $I_i\neq I_j$ for some $i\neq j$ or $I_1 \neq H_\ell(S)$ for all $\ell \in \{1,\ldots,k\}$ and $S \subseteq [n_\ell]$.

Then $\overline Z \setminus Z_{I_1,\ldots,I_q} \neq \emptyset$.
\end{lemma}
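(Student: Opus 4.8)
The plan is to reduce this statement about $q$-tuples to the single-array result Proposition~\ref{proposition:existence}, via the following structural observation about $\overline Z$: a tuple $(M_1,\ldots,M_q)$ lies in $\overline Z$ if and only if all of its pairwise differences lie in $Z$. Unwinding the definition of $\overline Z$, membership says that for every hyperplane $H_\ell(r)$ and every pair $i,j$ one has $\sum_{m\in H_\ell(r)}(M_i)_m = \sum_{m\in H_\ell(r)}(M_j)_m$, i.e. $\sum_{m\in H_\ell(r)}(M_i-M_j)_m = 0$ for all $\ell,r$, which is precisely $M_i-M_j\in Z$. Since $Z$ is a $\Q$-subspace, this is equivalent to $M_i-M_1\in Z$ for every $i$. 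With this characterization in hand I would split into the two cases dictated by the disjunction in the hypothesis.

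First case: the $I_i$ are not all equal, say $I_a\neq I_b$. Here I would take a \emph{constant} tuple $M_1=\cdots=M_q=M$, which lies in $\overline Z$ for any array $M$ because all pairwise differences vanish. It then remains to choose $M$ with $\sum_{m\in I_a}M_m\neq\sum_{m\in I_b}M_m$; taking $M$ to be the array with a single $1$ in a position $m_0$ of the symmetric difference $I_a\triangle I_b$ (which is non-empty since $I_a\neq I_b$) makes exactly one of the two sums equal to $1$ and the other $0$. Hence $(M,\ldots,M)\in\overline Z\setminus Z_{I_1,\ldots,I_q}$.

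Second case: all $I_i$ equal a common set $I=I_1$. Now the first disjunct of the hypothesis fails, so the second must hold, giving $I\neq H_\ell(S)$ for all $\ell,S$; that is, $I$ is not a disjoint union of hyperplanes. By Proposition~\ref{proposition:existence} there exists $M^\ast\in Z$ with $\sum_{m\in I}M^\ast_m\neq 0$. I would then use the tuple $(0,M^\ast,0,\ldots,0)$, whose existence as a valid tuple uses $q>1$. It lies in $\overline Z$ because $M^\ast\in Z$ and $Z$ is a subspace, so every pairwise difference (being $0$ or $\pm M^\ast$) lies in $Z$. The sum over $I$ of the first component is $0$, whereas that of the second is $\sum_{m\in I}M^\ast_m\neq 0$, so the tuple avoids $Z_{I_1,\ldots,I_q}$.

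The genuinely substantive input is Proposition~\ref{proposition:existence}, which carries all the combinatorial weight; granting it, the two constructions above are short. I therefore expect the only real step to be the difference-characterization of $\overline Z$ that transfers single-array information to tuples, and the main (minor) obstacle to be the bookkeeping of matching the two disjuncts of the hypothesis to the two constructions and verifying in each case that the exhibited tuple genuinely lies in $\overline Z$ but not in $Z_{I_1,\ldots,I_q}$.
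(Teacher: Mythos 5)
Your proof is correct. The paper itself states this lemma without proof (it is imported from \cite{FFW}), but your argument --- the characterization of $\overline Z$ as the tuples whose pairwise differences lie in $Z$, a constant tuple built from an indicator array when two of the $I_i$ differ, and a tuple $(0,M^\ast,0,\ldots,0)$ with $\sum_{m\in I_1}M^\ast_m\neq 0$ when all $I_i$ coincide --- is exactly the natural route through the paper's toolbox and matches the structure of the original proof. One cosmetic point: in your second case the weaker Lemma~\ref{lemma:onesum} (which is designed for precisely this purpose) already supplies $M^\ast$, so invoking Proposition~\ref{proposition:existence} is stronger than needed, though harmless, since that proposition is established independently of Lemma~\ref{lemma:avoidance} and no circularity arises.
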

%\begin{proof}
%First, let $I_i\neq I_j$ for some $i\neq j$ and assume, without loss of generality, that there exists $m\in I_i\setminus I_j$. Now let $M$ be such that $M_j=\delta_{m,j}$. Then $(M,\ldots,M)\in \overline Z\setminus Z_{I_1,\ldots,I_q}$.
%
%Now suppose that $I_1=\ldots=I_q\neq H_\ell(S)$ for all $\ell \in \{1,\ldots,k\}$ and $S \subseteq [n_\ell]$. Then $(M,(0),\ldots,(0))\in \overline Z\setminus Z_{I_1,\ldots,I_q}$, where $M$ is an array as in Proposition \ref{proposition:existence}.
%\end{proof}

\begin{proposition}\label{proposition:existenceofpairs}
Let $k,q>1$ and $1 < n_1 \leq \ldots \leq n_k$ be integers. Then there exists $(M_1,\ldots,M_q) \in \overline Z$ such that $(M_1,\ldots,M_q) \in Z_{I_1,\ldots,I_q}$ only if $I_1=\ldots=I_q$ is a disjoint union of hyperplanes, that is, $I_1 = H_\ell(S)$ for some $\ell \in \{1,\ldots,k\}$ and $S \subseteq [n_\ell]$. Moreover, the $M_i$ can be chosen such that all of their entries are positive integers.
\end{proposition}

\section{Sets of lengths of integer-valued polynomials over a DVR}

\begin{lemma}\label{lemma:Krullfactorizations}
Let $D$ be a Krull domain with quotient field $K$ and let $I$ be a finite set. Suppose that $D$ admits a prime element $\pi$ and denote by $\mathsf{v}$ its normalized valuation on $K$. Let $f\in \Int(D)$ be of the form
\[
f=\frac{\prod_{i\in I}f_i}{\pi^e},
\]
where, for every $i\in I$, $f_i\in D[X]$ is irreducible over $K$, $e\in \N$, and such that
\[
\min\{\mathsf v_Q(f(a))\mid a\in D\}=0 \text{ for all } Q\in \mathfrak{X}(D).
\]

If $f=g_1\cdots g_m$ is a decomposition into non-units in $\Int(D)$ then every $g_j$ is, up to multiplication by units of $D$, of the form
\[
g_j=\frac{\prod_{i\in I_j}f_i}{\pi^{e_j}},
\]
where $I_1,\ldots,I_m$ are non-empty and form a partition of $I$ and $\sum_{j=1}^m e_j=e$.
\end{lemma}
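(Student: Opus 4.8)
The plan is to decompose $f = g_1 \cdots g_m$ and analyze each factor $g_j$ as an element of $K[X]$, then control its denominator using the valuation condition. Since each $g_j \in \Int(D) \subseteq K[X]$ and $K[X]$ is a unique factorization domain in which the $f_i$ are irreducible (given), the product decomposition in $K[X]$ forces each $g_j$ to be, up to a unit of $K$, a product $\prod_{i \in I_j} f_i$ of some subcollection of the $f_i$, where the $I_j$ partition $I$. The nonemptiness of each $I_j$ will follow from the fact that the $g_j$ are nonunits: if some $I_j$ were empty, then $g_j$ would be a nonzero constant in $K$, hence a unit of $K[X]$, and I will need to argue that it cannot then be a nonunit of $\Int(D)$.

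First I would clear denominators. Writing each $g_j = h_j / d_j$ with $h_j \in D[X]$ and $d_j \in D \setminus \{0\}$, the factorization in $K[X]$ pins down, for each $j$, a scalar $\lambda_j \in K^\times$ with $g_j = \lambda_j \prod_{i \in I_j} f_i$ and $\prod_j \lambda_j = \pi^{-e}$ (since the product of all the $f_i$ over $I$ already equals $\pi^e f$, and the leading-coefficient normalization of the $f_i$ in $D[X]$ lets me track constants). The key point is then to show that each $\lambda_j$ is, up to a unit of $D$, a power $\pi^{-e_j}$ with $e_j \in \N$ and $\sum_j e_j = e$. This is where the hypothesis $\min\{\mathsf v_Q(f(a)) \mid a \in D\} = 0$ for all $Q \in \mathfrak{X}(D)$ enters decisively.

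The main obstacle, and the crux of the argument, is ruling out that any $\lambda_j$ involves a prime of $D$ other than $\pi$, and that it carries a \emph{negative} power of any $Q \neq \pi D$. I would argue at each height-one prime $Q$ via the valuation $\mathsf v_Q$. For $Q = \pi D$: the integrality of $f$ together with $\min_a \mathsf v(f(a)) = 0$ forces the total $\pi$-denominator across the $g_j$ to be exactly $\pi^e$, and since each $g_j \in \Int(D)$ must itself have $\min_a \mathsf v(g_j(a)) \geq \mathsf v(d_j)$, each $\pi$-power $e_j$ occurring is a nonnegative integer summing to $e$. For $Q \neq \pi D$: here I claim no denominator can appear at all. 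Indeed, if some $\lambda_j$ had $\mathsf v_Q(\lambda_j) < 0$, then since the $f_i$ lie in $D[X]$ we would get $\min_a \mathsf v_Q(g_j(a)) < 0$, violating $g_j \in \Int(D)$ unless compensated elsewhere; but compensation is impossible because the product over all $j$ recovers $f$, whose $Q$-minimal valuation is $0$, and the valuations at distinct primes are independent. Combining these, I conclude each $\lambda_j$ equals a unit of $D$ times $\pi^{-e_j}$, yielding the asserted form.

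The verification that $I_j \neq \emptyset$ for every $j$ is the remaining step: a factor $g_j$ with $I_j = \emptyset$ is a scalar $\lambda_j \in K^\times$; by the previous paragraph $\lambda_j$ is a unit of $D$ times a nonnegative power of $\pi$, and such an element is a nonunit of $\Int(D)$ only if it is a positive power of $\pi$. I would dispose of this by noting that $\pi$ divides $f$ in $\Int(D)$ would contradict $\min_a \mathsf v(f(a)) = 0$, so in fact $e_j = 0$ and $g_j$ would be a unit, contradicting the assumption that the $g_j$ are nonunits. Hence every $I_j$ is nonempty, the $I_j$ partition $I$, and the proof is complete.
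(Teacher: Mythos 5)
Your proposal is correct and takes essentially the same approach as the paper's (very terse) proof: factor in the UFD $K[X]$ to write $g_j = \lambda_j \prod_{i \in I_j} f_i$, use the hypothesis $\min\{\mathsf v_Q(f(a)) \mid a \in D\} = 0$ at every height-one prime $Q$ to force each scalar $\lambda_j$ to be a unit of $D$ times a power of $\pi$ (your ``compensation is impossible'' step, which the paper compresses into ``now it is clear that $a_j = \pi^{e_j}$''), and rule out constant factors via the minimum condition at $\pi D$. The only slip is attributing the nonnegativity of the $e_j$ to integrality of the individual $g_j$ --- it actually follows from the minimum condition at $\pi D$, since $e_j < 0$ would make $\pi$ divide $f$ in $\Int(D)$ --- but the lemma's conclusion does not require $e_j \geq 0$, so this does not affect correctness.
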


\begin{proof}
Let $f=g_1\cdots g_m$, then clearly no $g_j$ is a constant, because of the condition on the minima of valuations of evaluations of $f$. From the irreducibility of the $f_i$ it follows that every $g_j=\frac{\prod_{i\in I_j}f_i}{a_j}$ with $a_j\in K$, where the $I_j$ are non-empty and form a partition of $I$ and $\prod_{j=1}^m a_j=\pi^e$. Now it is clear, that $a_j=\pi^{e_j}$, where $\sum_{j=1}^m e_j=e$.
\end{proof}

\begin{lemma}\label{lemma:irreducibles}
Let $D$ be a Krull domain admitting a prime element $\pi$ such that $D/\pi D$ is finite. Let $K$ be the quotient field of $D$ and $\mathsf v: K^\times \to \Z$ the normalized valuation with respect to $\pi$. By $R_1,\ldots,R_q$ denote the residue classes of $D$ modulo $\pi D$. Let $k$ be a positive integer and $1 < n_1 \leq \ldots \leq n_k$ integers.

Let $\Sigma$ be a positive integer and let $r_1,\ldots,r_q$ be a complete system of residues with $r_i \in R_i$ such that $r_1=\pi$ and $r_s - r_1 \in Q$ for all $s\in \{1,\ldots,q\}$ and all $Q \in \mathfrak{X}(D) \setminus \{\pi D\}$ with $|D/Q| \leq \Sigma$ (this is possible by Remark \ref{remark:finitelymany} and the Approximation Theorem for Krull domains).

For $i \in [n_1] \times \ldots \times [n_k]$ and $s \in \{1, \ldots, q\}$, let $F_i^{(s)} \in D[X]$ irreducible over $K$ such that
\begin{itemize}
\item[(i)] $\min \{\mathsf{v}(F_i^{(s)}(a)) \mid a \in R_s\} = \mathsf{v}(F_i^{(s)}(r_s))$ for all $i \in [n_1] \times \ldots \times [n_k]$ and $s \in \{1, \ldots, q\}$,
\item[(ii)] $\min \{\mathsf{v}(F_i^{(s)}(a)) \mid a \in R_t\} = 0 = \mathsf{v}(F_i^{(s)}(r_t))$ for all $i \in [n_1] \times \ldots \times [n_k]$, $s \in \{1, \ldots, q\}$ and $t \in \{1, \ldots, q\} \setminus \{s\}$,
\item[(iii)] $\sum_{i \in [n_1] \times \dots \times [n_k]} \mathsf{v}(F_i^{(s)}(r_s)) = \sum_{i \in [n_1] \times \dots \times [n_k]} \mathsf{v}(F_i^{(t)}(r_t))$ for all $s,t \in \{1,\ldots,q\}$,
\item[(iv)] for each $Q \in \mathfrak{X}(D) \setminus \{\pi D\}$ there exits $a \in D$ such that $\mathsf{v}_Q(F_i^{(s)}(a)) = 0$ for all $i \in [n_1] \times \ldots \times [n_k]$ and $s \in \{1, \ldots, q\}$, and
\item[(v)] $\sum_{i,s} \deg F_i^{(s)} \leq \Sigma$.
\end{itemize}
Define $e = \sum_{i \in [n_1] \times \dots \times [n_k]} \mathsf{v}(F_i^{(s)}(r_s))$ for some $s \in \{1,\ldots,q\}$ (this is independent from $s$ by (iii)) and set 
\begin{align*}
H = \frac{\prod_{s = 1}^q \prod_{i \in [n_1] \times \ldots \times [n_k]} F_i^{(s)}}{\pi^e}.
\end{align*}

Then $H \in \Int(D)$.
Furthermore, $H$ is a product of two non-units in $\Int(D)$ if and only if there exist non-empty $I_1,\ldots,I_q \subsetneqq [n_1] \times \ldots \times [n_k]$ such that $\sum_{i \in I_s} \mathsf{v}(F_i^{(s)}(r_s)) = \sum_{i \in I_t} \mathsf{v}(F_i^{(t)}(r_t))$ for all $s,t \in \{1,\ldots,q\}$.

\end{lemma}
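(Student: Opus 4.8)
The plan is to deduce everything from Lemma~\ref{lemma:Krullfactorizations}, for which I must first verify not only that $H\in\Int(D)$ but the stronger normalization $\min\{\mathsf v_Q(H(a))\mid a\in D\}=0$ for every $Q\in\mathfrak X(D)$. I would argue valuation by valuation. For $Q\neq\pi D$ the element $\pi$ is a unit in $D_Q$ and each $F_i^{(s)}$ lies in $D[X]$, so $\mathsf v_Q(H(a))\geq 0$ for all $a\in D$, with equality at the point supplied by (iv). For $Q=\pi D$ I would sort $a\in D$ by its residue class $R_t$: by (ii) each factor with $s\neq t$ satisfies $\mathsf v(F_i^{(s)}(a))\geq 0=\mathsf v(F_i^{(s)}(r_t))$ on $R_t$, while by (i) each factor with $s=t$ satisfies $\mathsf v(F_i^{(t)}(a))\geq\mathsf v(F_i^{(t)}(r_t))$. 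Hence $\mathsf v\bigl(\prod_{s,i}F_i^{(s)}(a)\bigr)$ is minimized over $R_t$ at $a=r_t$ and equals $\sum_i\mathsf v(F_i^{(t)}(r_t))=e$; subtracting $e$ gives minimum $0$, settling both claims at $\pi D$.

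The same residue-class computation, carried out for an arbitrary sub-product, is the technical heart. Writing $I=\{1,\dots,q\}\times([n_1]\times\dots\times[n_k])$ and relabeling $F_i^{(s)}$ by the single index $(s,i)\in I$, I would record the criterion that, for $J\subseteq I$ and $e'\in\N$, the element $\pi^{-e'}\prod_{(s,i)\in J}F_i^{(s)}$ lies in $\Int(D)$ if and only if
\[
e'\leq\min_{1\le t\le q}\ \sum_{i:\,(t,i)\in J}\mathsf v\bigl(F_i^{(t)}(r_t)\bigr),
\]
the point again being that on each class $R_t$ the valuation of the sub-product is minimized simultaneously at $r_t$ by (i) and (ii), while factors from other classes are units there.

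Now I would invoke Lemma~\ref{lemma:Krullfactorizations}: every factorization $H=g_1g_2$ into non-units corresponds, up to units of $D$, to a partition $I=J_1\sqcup J_2$ into non-empty parts with $e_1+e_2=e$ and $g_j=\pi^{-e_j}\prod_{(s,i)\in J_j}F_i^{(s)}$. Setting $\alpha_t=\sum_{i:(t,i)\in J_1}\mathsf v(F_i^{(t)}(r_t))$, the complementary sum over $J_2$ equals $e-\alpha_t$, so the criterion forces $e_1\leq\min_t\alpha_t$ and $e_2\leq e-\max_t\alpha_t$; adding these and using $e_1+e_2=e$ squeezes $\max_t\alpha_t\leq e_1\leq\min_t\alpha_t$, whence all $\alpha_t$ coincide. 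Putting $I_s=\{i:(s,i)\in J_1\}$ then yields sets with $\sum_{i\in I_s}\mathsf v(F_i^{(s)}(r_s))$ independent of $s$. Conversely, from non-empty proper $I_s$ with common sum $c$ I would assemble $J_1=\bigcup_s\{s\}\times I_s$, $e_1=c$, $e_2=e-c$; the criterion gives $g_1,g_2\in\Int(D)$, and each factor is non-constant, hence a non-unit, so $H=g_1g_2$ is a genuine factorization.

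The step I expect to be most delicate is the bookkeeping that each $I_s$ is non-empty and a \emph{proper} subset, since non-emptiness of $J_1$ and $J_2$ a priori guarantees only one nontrivial $I_s$. Here I would use that the common value $c=e_1$ is forced to be positive: this holds because $\mathsf v(F_i^{(s)}(r_s))>0$ in the situation at hand (equivalently, $\overline{r_s}$ is a root of $\overline{F_i^{(s)}}$ modulo $\pi$), so $\alpha_t=c>0$ rules out an empty $I_t$ and $e-c>0$ rules out a full $I_t$. This positivity is exactly what the arrays furnished by Proposition~\ref{proposition:existenceofpairs} provide when the $F_i^{(s)}$ are later chosen via Lemma~\ref{lemma:glueing}, so the equivalence closes in the intended application.
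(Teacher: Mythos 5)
Your proposal follows essentially the same route as the paper's proof: verify that $H$ satisfies the normalization hypothesis $\min\{\mathsf v_Q(H(a))\mid a\in D\}=0$ of Lemma~\ref{lemma:Krullfactorizations} (your residue-class computation at $\pi D$ and the use of (iv) at the other primes is what the paper compresses into ``clearly'' plus a parenthetical), then use Lemma~\ref{lemma:Krullfactorizations} to reduce any splitting $H=g_1g_2$ to a partition of the global index set together with a split $e_1+e_2=e$ of the $\pi$-power, and finally compare valuations at the representatives $r_t$. Your squeeze $\max_t\alpha_t\le e_1\le\min_t\alpha_t$ is just the contrapositive of the paper's contradiction argument, and your forward direction coincides with the paper's.

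The delicate point you isolate in your last paragraph is real, and there you are in fact more careful than the paper itself: the paper asserts that Lemma~\ref{lemma:Krullfactorizations} directly yields \emph{non-empty, proper} slices $I_1,\ldots,I_q$, whereas that lemma only guarantees that the two global index sets are non-empty, i.e.\ that \emph{some} $I_s$ is non-empty and \emph{some} (possibly different) $I_t$ is proper. Your repair via positivity of the $\mathsf v(F_i^{(s)}(r_s))$ is the right one, but, as you yourself note, this positivity is not implied by hypotheses (i)--(v): condition (i) allows $\mathsf v(F_i^{(s)}(r_s))=0$. The extra hypothesis is genuinely needed, not merely convenient. For instance, over $D=\Z_{(2)}$ take four quadratics whose minimal valuations on the two residue classes are $(0,2)$ and $(1,1)$, say $X^2+X+1$, $X^2+8X+8$, $(X-1)^2+2$, $(X-3)^2+2$; all of (i)--(v) hold with $e=2$, the root-free polynomial $X^2+X+1$ splits off as a factor by itself (the remaining product divided by $4$ is integer-valued), so $H$ is a product of two non-units, yet the possible subset sums $\{0,2\}$ and $\{1\}$ never match, so no admissible $I_1,I_2$ exist and the stated equivalence fails. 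Thus what you prove is the corrected statement with positivity added; since Lemma~\ref{lemma:glueing} furnishes $\mathsf v(F_i^{(s)}(r_s))=\deg F_i^{(s)}\ge 2$, this corrected version is exactly what Theorem~\ref{theorem:setsoflenghts} uses, and your argument closes the application as intended.
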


\begin{proof}
Clearly $H$ is integer-valued over $D$. If there exist $I_1,\ldots,I_q$ as in the lemma, define $e' = \sum_{i \in I_s} \mathsf{v}(F_i^{(s)}(r_s))$ for some $s \in \{1,\ldots,q\}$. Then clearly 
\begin{align*}
H = \frac{\prod_{s = 1}^q \prod_{i \in I_s} F_i^{(s)}}{\pi^{e'}} \cdot \frac{\prod_{s = 1}^q \prod_{i \in ([n_1] \times \ldots \times [n_k]) \setminus I_s} F_i^{(s)}}{\pi^{e-e'}}
\end{align*}
is a decomposition into two non-units in $\Int(D)$.

Conversely, if $H = H_1 \cdot H_2$ is a decomposition of $H$ where $H_1$ and $H_2$ are non-units of $\Int(D)$, then, by Lemma~\ref{lemma:Krullfactorizations}, there exist non-empty $I_1,\ldots,I_q \subsetneqq [n_1]\times \ldots \times [n_k]$  such that 
\[ H_1 = \frac{\prod_{s = 1}^q \prod_{i \in I_s} F_i^{(s)}}{\pi^{e'}} \hspace{1cm}\text{ and }\hspace{1cm} H_2 = \frac{\prod_{s = 1}^q \prod_{i \in ([n_1] \times \ldots \times [n_k]) \setminus I_s} F_i^{(s)}}{\pi^{e-e'}} \]
for some $e' \in \{0,\ldots,e\}$ (note that the property (ii) in Lemma~\ref{lemma:Krullfactorizations} is satisfied by (iv) and (v) together with Remark~\ref{remark:infiniteindex}). Assume to the contrary that $\sum_{i \in I_s} \mathsf{v}(F_i^{(s)}(r_s)) \neq \sum_{i \in I_t} \mathsf{v}(F_i^{(t)}(r_t))$ for some $s,t \in \{1,\ldots,q\}$, say $\sum_{i \in I_s} \mathsf{v}(F_i^{(s)}(r_s)) > \sum_{i \in I_t} \mathsf{v}(F_i^{(t)}(r_t))$. Since $H_1$ is an integer-valued polynomial on $V$, it follows that 
\[  \sum_{i \in I_s} \mathsf v(F_i^{(s)}(r_s)) >  \sum_{i \in I_t} \mathsf v(F_i^{(s)}(r_t)) \geq e'.\] 
Hence we get 
\[  \sum_{i \in ([n_1]\times \ldots \times [n_k]) \setminus I_s} \mathsf v(F_i^{(s)}(r_s)) <  \sum_{i \in ([n_1]\times \ldots \times [n_k]) \setminus I_t} \mathsf v(F_i^{(s)}(r_t)) \leq e - e', \]
which is a contradiction because $H_2 \in \Int(V)$. 
%Also $(b)$ and $(c)$ follow immediately from the fact that $H_1,H_2\in \Int(V)$.
\end{proof}

By a \textit{$q$-tuple of ordered partitions} of sets $I_1,\ldots,I_q$, we mean an equivalence class of tuples $((I_1^{(1)},\ldots,I_1^{(\ell)}),\ldots,(I_q^{(1)},\ldots,I_q^{(\ell)}))$, where the $I_s^{(\lambda)}$ form a partition of $I_s$, under the equivalence relation where
\[
((I_1^{(1)},\ldots,I_1^{(\ell)}),\ldots,(I_q^{(1)},\ldots,I_q^{(\ell)}))
\]
is identified with
\[
((I_1^{(\sigma(1))},\ldots,I_1^{(\sigma(\ell))}),\ldots,(I_q^{(\sigma(1))},\ldots,I_q^{(\sigma(\ell))}))
\]
for all permutations $\sigma$ of $\{1,\ldots,\ell\}$. The positive integer $\ell$ is called the \textit{length} of the $q$-tuple of ordered partitions.

\begin{lemma}\label{lemma:factorizations}

Let $D$ be a Krull domain admitting a prime element $\pi$ such that $D/\pi D$ is finite. Let $K$ be the quotient field of $D$ and $\mathsf v: K^\times \to \Z$ the normalized valuation with respect to $\pi$. By $R_1,\ldots,R_q$ denote the residue classes of $D$ modulo $\pi D$. Let $k$ be a positive integer and $1 < n_1 \leq \ldots \leq n_k$ integers.

Let $\Sigma$ be a positive integer and let $r_1,\ldots,r_q$ be a complete system of residues with $r_i \in R_i$ such that $r_1=\pi$ and $r_s - r_1 \in Q$ for all $s\in \{1,\ldots,q\}$ and all $Q \in \mathfrak{X}(D) \setminus \{\pi D\}$ with $|D/Q| \leq \Sigma$ (this is possible by Remark \ref{remark:finitelymany} and the Approximation Theorem for Krull domains).

For $i \in [n_1] \times \ldots \times [n_k]$ and $s \in \{1, \ldots, q\}$, let $F_i^{(s)} \in V[X]$ irreducible over $K$ such that
\begin{itemize}
\item[(i)] $\min \{\mathsf{v}(F_i^{(s)}(a)) \mid a \in R_s\} = \mathsf{v}(F_i^{(s)}(r_s))$ for all $i \in [n_1] \times \ldots \times [n_k]$ and $s \in \{1, \ldots, q\}$,
\item[(ii)] $\min \{\mathsf{v}(F_i^{(s)}(a)) \mid a \in R_t\} = 0 = \mathsf{v}(F_i^{(s)}(r_t))$ for all $i \in [n_1] \times \ldots \times [n_k]$, $s \in \{1, \ldots, q\}$ and $t \in \{1, \ldots, q\} \setminus \{s\}$,
\item[(iii)] $\sum_{i \in [n_1] \times \dots \times [n_k]} \mathsf{v}(F_i^{(s)}(r_s)) = \sum_{i \in [n_1] \times \dots \times [n_k]} \mathsf{v}(F_i^{(t)}(r_t))$ for all $s,t \in \{1,\ldots,q\}$,
\item[(iv)] for each $Q \in \mathfrak{X}(D) \setminus \{\pi D\}$ there exits $a \in D$ such that $\mathsf{v}_Q(F_i^{(s)}(a)) = 0$ for all $i \in [n_1] \times \ldots \times [n_k]$ and $s \in \{1, \ldots, q\}$, and
\item[(v)] $\sum_{i,s} \deg F_i^{(s)} \leq \Sigma$.
\end{itemize}
Define $e = \sum_{i \in [n_1] \times \dots \times [n_k]} \mathsf{v}(F_i^{(s)}(r_s))$ for some $s \in \{1,\ldots,q\}$ (this is independent from $s$ by (iii)) and set 
\begin{align*}
H = \frac{\prod_{s = 1}^q \prod_{i \in [n_1] \times \ldots \times [n_k]} F_i^{(s)}}{\pi^e}.
\end{align*}

Then, for every $\ell>0$, a bijective correspondence between, on the one hand, $q$-tuples of ordered partitions
\[ ((I_1^{(1)},\ldots,I_1^{(\ell)}),\ldots,(I_q^{(1)},\ldots,I_q^{(\ell)})) \]
of $[n_1]\times \ldots \times [n_k]$ satisfying
\begin{itemize}
\item[(1)] $e_j := \sum_{i \in I_s^{(j)}} \mathsf{v}(F_i^{(s)}(r_s)) = \sum_{i \in I_t^{(j)}} \mathsf{v}(F_i^{(t)}(r_t))$ for all $s,t \in \{1,\ldots,q\}$ and $j \in \{1, \ldots, \ell\}$,
\item[(2)] for all $j \in \{1, \ldots, \ell\}$ and non-empty $J_1^{(j)} \subsetneqq I_1^{(j)},\ldots, J_q^{(j)} \subsetneqq I_q^{(j)}$ there exist $s,t \in \{1,\ldots,q\}$ such that $\sum_{i \in J_s^{(j)}} \mathsf{v}(F_i^{(s)}(r_s)) \neq \sum_{i \in J_t^{(j)}} \mathsf{v}(F_i^{(t)}(r_t))$,
\end{itemize}
and, on the other hand, essentially different factorizations of $H$ into $\ell$ irreducible elements of $\Int(D)$ is given by
\begin{align*}
((I_1^{(1)},\ldots,I_1^{(\ell)}),\ldots,(I_q^{(1)},\ldots,I_q^{(\ell)})) \mapsto  \frac{\prod_{s=1}^q \prod_{i \in I_s^{(1)}} F_i^{(s)}}{\pi^{e_1}} \cdot \ldots \cdot \frac{\prod_{s=1}^q \prod_{i \in I_s^{(\ell)}} F_i^{(s)}}{\pi^{e_\ell}}.
\end{align*} 
\end{lemma}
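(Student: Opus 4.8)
\emph{Overview and forward direction.} The plan is to verify that the displayed assignment is a well-defined bijection in three stages, the two workhorses being Lemma~\ref{lemma:irreducibles} (applied not only to $H$ but to each candidate factor, viewed as the ``$H$'' of a sub-family of the $F_i^{(s)}$) and Lemma~\ref{lemma:Krullfactorizations} (applied once to the whole product). For the forward direction, given a $q$-tuple of ordered partitions satisfying (1) and (2), I set $G_j = \frac{\prod_{s=1}^q \prod_{i \in I_s^{(j)}} F_i^{(s)}}{\pi^{e_j}}$, where (1) guarantees that the exponent $e_j$ is well-defined, i.e. independent of the chosen $s$. The key observation is that the sub-family $\{F_i^{(s)} \mid i \in I_s^{(j)},\ s \in \{1,\ldots,q\}\}$ again satisfies (i)--(v): properties (i), (ii), (iv) concern individual polynomials, (v) only weakens under passing to a sub-family, and (iii) for the sub-family is exactly condition (1). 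Hence Lemma~\ref{lemma:irreducibles} applies to $G_j$, showing $G_j \in \Int(D)$ and that $G_j$ is a product of two non-units if and only if there are non-empty proper $J_s^{(j)} \subsetneq I_s^{(j)}$ with matching valuation-sums; condition (2) is precisely the negation of this, so each $G_j$ is irreducible. Finally $\prod_{j=1}^\ell G_j = H$, since for each fixed $s$ the $I_s^{(j)}$ partition the grid (so the numerators multiply correctly) and $\sum_{j=1}^\ell e_j = \sum_{i} \mathsf v(F_i^{(s)}(r_s)) = e$ by (1) and the definition of $e$.

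\emph{Surjectivity.} Starting from an arbitrary factorization $H = G_1 \cdots G_\ell$ into $\ell$ irreducibles, I first check that the hypothesis of Lemma~\ref{lemma:Krullfactorizations}, namely $\min_{a}\mathsf v_Q(H(a)) = 0$ for every $Q \in \mathfrak X(D)$, holds: for $Q = \pi D$ one evaluates at $r_s$ and uses (ii) and (iii), while for $Q \neq \pi D$ one uses the common point supplied by (iv). Lemma~\ref{lemma:Krullfactorizations} then forces each $G_j$ to equal, up to a unit of $D$, a quotient $\frac{\prod_{(s,i)\in A_j} F_i^{(s)}}{\pi^{e_j}}$ for a partition $\{A_j\}$ of the full index set with $\sum_j e_j = e$; writing $I_s^{(j)} = \{i \mid (s,i) \in A_j\}$ yields, for each $s$, an ordered partition of the grid. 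To recover (1), I evaluate $G_j$ at $r_s$: by (ii) only the factors with superscript $s$ contribute, so $G_j \in \Int(D)$ gives $\sum_{i \in I_s^{(j)}} \mathsf v(F_i^{(s)}(r_s)) \geq e_j$ for every $s$. Summing over $j$ and comparing with $\sum_j e_j = e = \sum_i \mathsf v(F_i^{(s)}(r_s))$ forces equality in each instance, which is exactly (1). Condition (2) then follows by applying Lemma~\ref{lemma:irreducibles} to each (now admissible) $G_j$ and using that it is irreducible.

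\emph{Injectivity and essential difference.} Relabelling the parts of all the ordered partitions by a common permutation $\sigma$ merely reorders the factors $G_j$, so the map descends to equivalence classes. Conversely, since $K[X]$ is a UFD, the $F_i^{(s)}$ are pairwise non-associated irreducibles, $\pi$ is prime, and the units of $\Int(D)$ are exactly those of $D$, each factor $G_j$ determines the index sets $I_s^{(j)}$ and the exponent $e_j$ uniquely up to a unit of $D$. Hence two tuples produce essentially the same factorization only if they agree after a common relabelling $\sigma$, which is injectivity on equivalence classes.

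\emph{Main obstacle.} I expect the surjectivity step to be the crux, specifically the passage from ``each $G_j$ is integer-valued'' to the \emph{exact} balance condition (1): the individual inequalities $\sum_{i \in I_s^{(j)}} \mathsf v(F_i^{(s)}(r_s)) \geq e_j$ carry no equality information on their own, and it is only the global accounting $\sum_j e_j = e$ together with the $s$-independence of $e$ from (iii) that collapses them to equalities. The other delicate point is bookkeeping which of the hypotheses (i)--(v) survive restriction to a sub-family, so that Lemma~\ref{lemma:irreducibles} may be legitimately reapplied factor by factor.
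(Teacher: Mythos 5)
Your proposal is correct and takes essentially the same route as the paper: the paper's entire proof of this lemma is the single sentence that it ``follows immediately from Lemma~\ref{lemma:irreducibles}'', and your argument is precisely the detailed working-out of that claim --- applying Lemma~\ref{lemma:irreducibles} to sub-families for well-definedness and irreducibility of each factor, and using Lemma~\ref{lemma:Krullfactorizations} together with the evaluation-at-$r_s$ balancing argument for surjectivity. You have simply made explicit (including the equality-forcing summation over $j$, which is indeed the crux) what the authors left implicit.
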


\begin{proof}
This follows immediately from Lemma~\ref{lemma:irreducibles}.
\end{proof}

\begin{theorem}\label{theorem:setsoflenghts}
Let $D$ be a Krull domain admitting a prime element $\pi$ such that $D/\pi D$ is finite. 

Then $\Int(D)$ has full system of multisets of lengths, i.e., for all positive integers $k$ and all integers $1 < n_1 \leq \ldots \leq n_k$ there exists an integer-valued polynomial $H \in \Int(D)$ which has precisely $k$ essentially different factorizations into irreducible elements of $\Int(D)$ whose lengths are exactly $n_1,\ldots,n_k$.
\end{theorem}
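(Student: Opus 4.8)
The plan is to assemble the pieces already prepared: use Proposition~\ref{proposition:existenceofpairs} to choose the valuations, realize them by the irreducible polynomials from Lemma~\ref{lemma:glueing}, and then read off the factorizations from the bijection in Lemma~\ref{lemma:factorizations}. Write $q = |D/\pi D|$ and let $R_1, \ldots, R_q$ be the residue classes modulo $\pi D$; note $q \geq 2$.

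Assume first $k \geq 2$. Apply Proposition~\ref{proposition:existenceofpairs} to $k$, $q$ and $1 < n_1 \leq \ldots \leq n_k$ to obtain $(M_1, \ldots, M_q) \in \overline Z$ with positive integer entries such that $(M_1, \ldots, M_q) \in Z_{I_1, \ldots, I_q}$ forces $I_1 = \ldots = I_q$ to be a disjoint union of hyperplanes. Replacing each $M_s$ by $2 M_s$ keeps both $\overline Z$-membership and the avoidance property intact (they are cut out by homogeneous linear equations) and makes every entry $\geq 2$. Put $\Sigma = \sum_{s,i} (M_s)_i$ and fix a complete residue system $r_1, \ldots, r_q$ as in Lemma~\ref{lemma:factorizations}. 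For each $s$ and each $i \in [n_1] \times \ldots \times [n_k]$, apply Lemma~\ref{lemma:glueing} with residue class $R_s$ and degree $(M_s)_i \geq 2$ to obtain an irreducible $F_i^{(s)} \in D[X]$ of degree $(M_s)_i$ with $\min\{\mathsf v(F_i^{(s)}(a)) \mid a \in R_s\} = \mathsf v(F_i^{(s)}(r_s)) = (M_s)_i$ and $\mathsf v(F_i^{(s)}(a)) = 0$ for $a \notin R_s$; as Lemma~\ref{lemma:glueing} produces infinitely many such polynomials, I pick them pairwise non-associated over $K$. Conditions (i) and (ii) of Lemma~\ref{lemma:factorizations} are immediate, (iii) follows by summing the $\overline Z$-equalities over a full axis, and (v) holds by the choice of $\Sigma$. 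For (iv), given a height-one prime $Q \neq \pi D$ with $|D/Q| \leq \Sigma$ one takes $a = 0$, using $\mathsf v_Q(F_i^{(s)}(0)) = 0$ from Lemma~\ref{lemma:glueing}, while for $|D/Q| > \Sigma$ the monic product $\prod_{s,i} F_i^{(s)}$ has degree $\Sigma < |D/Q|$, so Remark~\ref{remark:infiniteindex} supplies an $a$ with $\mathsf v_Q(F_i^{(s)}(a)) = 0$ for all $s,i$.

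Let $H$ be the resulting polynomial. By Lemma~\ref{lemma:factorizations}, essentially different factorizations of $H$ of length $\ell$ correspond to $q$-tuples of ordered partitions $((I_s^{(j)})_{j})_{s}$ of $[n_1] \times \ldots \times [n_k]$ satisfying (1) and (2). Condition (1) for a fixed block $j$ is exactly $(M_1, \ldots, M_q) \in Z_{I_1^{(j)}, \ldots, I_q^{(j)}}$, so the avoidance property gives $I_1^{(j)} = \ldots = I_q^{(j)} = H_{\ell_j}(S_j)$ for some axis $\ell_j$ and $S_j \subseteq [n_{\ell_j}]$. If some $|S_j| \geq 2$, choosing the single sub-slice $H_{\ell_j}(r) \subsetneq H_{\ell_j}(S_j)$ in every coordinate $s$ produces equal block-sums (again by $\overline Z$), contradicting (2); hence each block is a single slice $H_{\ell_j}(r_j)$. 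Since two slices along different axes always meet (all $n_i \geq 2$ and $k \geq 2$), a partition into single slices must use one common axis $\ell$, forcing the blocks to be precisely $H_\ell(1), \ldots, H_\ell(n_\ell)$. Thus the admissible $q$-tuples are exactly the $k$ axis-slicings, the one for axis $\ell$ having length $n_\ell$, and they are pairwise distinct as partitions; by the bijection, $H$ has exactly $k$ essentially different factorizations, of lengths $n_1, \ldots, n_k$.

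It remains to treat $k = 1$, which Proposition~\ref{proposition:existenceofpairs} excludes. Here $\overline Z$ forces all $M_s$ to coincide, and I take the common vector to be superincreasing with entries $\geq 2$, say $(M_s)_i = 2^i$, so that distinct subsets of $[n_1]$ have distinct sums. Running the same correspondence, condition (1) now forces all $q$ partitions to agree block-by-block, and condition (2) then forces every block to be a singleton, leaving only the length-$n_1$ factorization. I expect the counting step to be the crux: extracting from the avoidance property that each factorization block is a single hyperplane slice (the refinement argument against (2)) and that all blocks share one axis (the disjointness argument), which together pin the factorizations down to exactly the $k$ axis-slicings.
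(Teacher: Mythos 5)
Your proposal is correct and follows essentially the same route as the paper: Proposition~\ref{proposition:existenceofpairs} supplies the arrays $(M_1,\ldots,M_q)$, Lemma~\ref{lemma:glueing} realizes their entries as valuations of irreducible polynomials, and Lemma~\ref{lemma:factorizations} turns the avoidance property into the statement that the only admissible $q$-tuples of partitions are the $k$ axis-slicings of cardinalities $n_1,\ldots,n_k$. Your only departures are points of added care rather than of substance: you double the $M_s$ so that all entries are $\geq 2$ as Lemma~\ref{lemma:glueing} requires, you take $\Sigma=\sum_{s,i}(M_s)_i$ so that hypothesis (v) of Lemma~\ref{lemma:factorizations} genuinely holds (the paper's $\Sigma$, summed over a single $s$, is off by a factor of $q$ there), and you settle $k=1$ with a superincreasing vector where the paper simply takes $H=X^{n_1}$.
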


\begin{proof}
Let $K$ be the quotient field of $D$, $\mathsf v: K^\times \to \Z$ the normalized valuation of the prime element $\pi \in D$. By $\pi D = R_1,\ldots,R_q$ denote the residue classes of $D$ modulo $\pi D$.

By Proposition~\ref{proposition:existenceofpairs}, there exists a $q$-tuple of arrays $(M_1,\ldots,M_q) \in (\Q^{n_1\times \ldots \times n_k})^q$ where all arrays have positive integers as entries and such that $(M_1,\ldots,M_q) \in \overline Z$ with $(M_1,\ldots,M_q) \in Z_{I_1,\ldots,I_q}$ only if $I_1=\ldots=I_q$ is a disjoint union of hyperplanes, that is, $I_1 = H_\ell(S)$ for some $\ell \in \{1,\ldots,k\}$ and $S \subseteq [n_\ell]$ (see Notation~\ref{notation:hyperplanes}, \ref{notation:tensors} and \ref{notation:pairsoftensors}). Recall that $(M_s)_i$ for $i \in [n_1]\times \ldots \times [n_k]$ denotes the $i$-th entry of $M_s$.

We set $\Sigma=\sum_{i\in [n_1] \times \ldots \times [n_k]} (M_s)_i$ which is independent of the choice of $s$. Let $r_1,\ldots,r_q$ be a complete system of residues with $r_i \in R_i$ such that $r_1=\pi$ and $r_s - r_1 \in Q$ for all $s\in \{1,\ldots,q\}$ and all $Q \in \mathfrak{X}(D) \setminus \{\pi D\}$ with $|D/Q| \leq \Sigma$ (this is possible by Remark \ref{remark:finitelymany} and the Approximation Theorem for Krull domains).

If $k=1$ then $H= X^{n_1}$ has the desired property. So let $k\geq 2$. By Lemma~\ref{lemma:factorizations} it suffices to construct polynomials $F_i^{(s)}$ as in the hypothesis of this lemma such that there are exactly $k$ different $q$-tuples of partitions of $[n_1] \times \ldots \times [n_k]$ and the cardinalities of the partitions are exactly $n_1,\ldots,n_k$.

By Lemma~\ref{lemma:glueing}, there exist, for $i \in [n_1]\times \ldots \times [n_k]$ and $s \in \{1,\ldots,q\}$, polynomials $F_i^{(s)}\in D[X]$ which are all pairwise non-associated and irreducible over $K$ with the following properties:
\begin{itemize}
\item[(i)] $\min \{\mathsf{v}(F_i^{(s)}(a)) \mid a \in R_s\} = \mathsf{v}(F_i^{(s)}(r_s)) = (M_s)_i=\deg(F_i^{(s)})$ for all $i \in [n_1] \times \ldots \times [n_k]$ and $s \in \{1, \ldots, q\}$,
\item[(ii)]  $\min \{\mathsf{v}(F_i^{(s)}(a)) \mid a \in R_t\} = 0 = \mathsf{v}(F_i^{(s)}(r_t))$ for all $i \in [n_1] \times \ldots \times [n_k]$, $s \in \{1, \ldots, q\}$ and $t \in \{1, \ldots, q\} \setminus \{s\}$, and
\item[(iii)] $\mathsf{v}_Q(F_i^{(s)}(0)) = 0$ for all $Q \in \mathfrak{X}(D) \setminus \{ \pi D\}$ with $|D/Q| \leq \Sigma$ and all $i \in [n_1] \times \ldots \times [n_k]$ and $s \in \{1, \ldots, q\}$.
\end{itemize}

By the choice of the $M_s$, the only admissible (in the sense of Lemma~\ref{lemma:factorizations}) $q$-tuples of ordered partitions of the index set $[n_1]\times \ldots \times [n_k]$, namely those that correspond to factorizations into irreducibles, are the ones of the form $((H_r(1),\ldots, H_r(n_r)), \ldots, (H_r(1), \ldots, H_r(n_r)))$ for $r \in \{1,\ldots,k\}$. These are exactly $k$ many of cardinalities $n_1,\ldots,n_k$.
\end{proof}

\begin{corollary}
Let $D$ be a unique factorization domain. Exactly one of the following holds:
\begin{itemize}
\item[(1)] $\Int(D) = D[X]$ is a unique factorization domain.
\item[(2)] $\Int(D)$ has full system of multisets of lengths, i.e., for all positive integers $k$ and all integers $1 < n_1 \leq \ldots \leq n_k$ there exists an integer-valued polynomial $H \in \Int(D)$ which has precisely $k$ essentially different factorizations into irreducible elements of $\Int(D)$ whose lengths are exactly $n_1,\ldots,n_k$.
\end{itemize}

Moreover, $(1)$ holds if and only if the residue field of each height-one prime ideal in $D$ is infinite.
\end{corollary}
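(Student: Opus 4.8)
The plan is to reduce everything to the classical criterion for $\Int(D)=D[X]$ and then invoke Theorem~\ref{theorem:setsoflenghts} for the remaining case. First I would record the translations available because $D$ is a UFD: a UFD is a Krull domain, its height-one prime ideals are exactly the ideals $pD$ with $p$ a prime element, and for such a prime the residue field is infinite if and only if the integral domain $D/pD$ is infinite (a finite integral domain being a field, so that the residue field $\operatorname{Frac}(D/pD)$ is finite precisely when $D/pD$ is). Hence the ``Moreover'' condition reads: $D/pD$ is infinite for every prime element $p$ of $D$.

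The crux is the following claim, which is essentially classical but which I would reprove from the tools at hand: $\Int(D)=D[X]$ if and only if $D/pD$ is infinite for every prime element $p$. For the ``if'' direction, suppose every $D/pD$ is infinite and take $f\in\Int(D)$; writing $f=g/d$ with $g\in D[X]$ and $d\in D\setminus\{0\}$, assume toward a contradiction that $f\notin D[X]$. Then some prime $p$ divides $d$ and $e_0:=\min_i \mathsf v_{pD}(g_i)<\mathsf v_{pD}(d)$, where the $g_i$ are the coefficients of $g$. The polynomial $F:=g/p^{e_0}$ then lies in $D[X]\setminus pD[X]$, and since $\deg F<|D/pD|=\infty$, Remark~\ref{remark:infiniteindex} produces $a\in D$ with $F(a)\notin pD$, so $\mathsf v_{pD}(g(a))=e_0<\mathsf v_{pD}(d)$, contradicting $f(a)=g(a)/d\in D$. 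For the ``only if'' direction, if some $D/pD$ is finite it is a field, say $D/pD\cong\mathbb F_q$; then $a^q\equiv a\pmod{pD}$ for every $a\in D$, so $\frac{X^q-X}{p}\in\Int(D)\setminus D[X]$ and $\Int(D)\ne D[X]$. The reduction to a single prime together with the correct bookkeeping for Remark~\ref{remark:infiniteindex} is the only real obstacle here; all the genuinely hard factorization-theoretic content is already packaged in Theorem~\ref{theorem:setsoflenghts}.

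Finally I would assemble the dichotomy. If every $D/pD$ is infinite, then $\Int(D)=D[X]$ by the claim, and this is a UFD by Gauss's theorem, so (1) holds; since a UFD has essentially unique factorizations, no element of $\Int(D)$ admits two factorizations of different lengths, and (2) fails. If instead some $D/pD$ is finite, then $p$ is a prime element with finite residue field, so $D$ satisfies the hypotheses of Theorem~\ref{theorem:setsoflenghts} and (2) holds; moreover $\Int(D)\ne D[X]$ by the claim, and (2) forces non-unique lengths (apply it with $k=2$ and $2=n_1<n_2=3$ to obtain an element with two factorizations of different lengths), so (1) fails. As the two cases are exhaustive and mutually exclusive, exactly one of (1), (2) holds, and the above shows that it is (1) precisely when every $D/pD$ is infinite, i.e.\ when the residue field of each height-one prime of $D$ is infinite, which is the ``Moreover'' assertion.
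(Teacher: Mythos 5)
Your proof is correct, and its overall architecture coincides with the paper's: split according to whether every height-one prime of $D$ has infinite residue field, handle case (1) via $\Int(D)=D[X]$ being a UFD (Gauss), and handle case (2) by invoking Theorem~\ref{theorem:setsoflenghts}. The difference is in how the pivot is justified: the paper simply cites Cahen--Chabert (Corollary I.3.15) for the criterion ``$\Int(D)=D[X]$ if and only if every height-one prime of $D$ has infinite residue field,'' whereas you reprove this criterion from scratch --- the ``if'' direction by clearing denominators, passing to $F=g/p^{e_0}\in D[X]\setminus pD[X]$ and applying Remark~\ref{remark:infiniteindex} (valid since $\deg F<|D/pD|=\infty$), and the ``only if'' direction via the Fermat-type polynomial $\frac{X^q-X}{p}$ when $D/pD\cong\mathbb F_q$ is finite. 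Your translation between ``residue field of $pD$ infinite'' and ``$D/pD$ infinite'' (a finite integral domain is a field) and your identification of height-one primes of a UFD with the principal primes $pD$ are both needed and correctly handled; these points are absorbed into the citation in the paper's version. What your route buys is self-containedness: the corollary then depends only on results proved in the paper plus Gauss's theorem, at the cost of a longer argument. You also make explicit the mutual exclusivity of (1) and (2) (unique factorization versus the existence, for $k=2$, $n_1=2$, $n_2=3$, of an element with two essentially different factorizations of distinct lengths), which the paper leaves implicit.
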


\begin{proof}
By~\cite[Corollary I.3.15]{Cahen-Chabert}, $\Int(D) = D[X]$ if and only if the residue field of each height-one prime ideal in $D$ is infinite, in which case $\Int(D)$ is a UFD. In the other case, use Theorem~\ref{theorem:setsoflenghts}.
\end{proof}

\bibliographystyle{amsplainurl}
\bibliography{bibliography}

\vspace{0.5cm}
\noindent
\textsc{Victor Fadinger, Institute for Mathematics and Scientific Computing, Universität Graz, Heinrichstraße 36, 8010 Graz, Austria} \\
\textit{E-mail address}: \texttt{viktor.fadinger@gmail.com}\\

\noindent
\textsc{Daniel Windisch, Department of Analysis and Number Theory (5010), Technische Universität Graz, Kopernikusgasse 24, 8010 Graz, Austria} \\
\textit{E-mail address}: \texttt{dwindisch@math.tugraz.at}

\end{document}